\numberwithin{equation}{section}
\renewcommand{\leq}{\leqslant}
\renewcommand{\geq}{\geqslant}
\numberwithin{equation}{section}
\newcommand{\Cc}{\mathbf{C}}
\newcommand{\Zz}{\mathbf{Z}}
\newcommand{\Rr}{\mathbf{R}}
\newcommand{\Gg}{\mathbf{G}}
\newcommand{\Qq}{\mathbf{Q}}
\newcommand{\Fp}{{\mathbf{F}_p}}
\newcommand{\Fpt}{{\mathbf{F}^\times_p}}
\newcommand{\Ff}{\mathbf{F}}
\newcommand{\bQl}{\bar{\Qq}_{\ell}}
\newcommand{\mods}[1]{\,(\mathrm{mod}\,{#1})}
\newcommand{\what}{\widehat}
\newcommand{\rmB}{\mathrm{B}}
\newcommand{\ra}{\rightarrow}
\newcommand{\lra}{\longrightarrow}
\DeclareMathOperator{\rank}{rank}
\DeclareMathOperator{\frob}{\mathrm{Fr}}
\DeclareMathOperator{\Tr}{tr}
\DeclareMathOperator{\swan}{Swan}
\DeclareMathOperator{\cond}{\mathbf{c}}
\newcommand{\eps}{\varepsilon}
\renewcommand{\rho}{\varrho}
\newcommand{\sheaf}[1]{\mathcal{{#1}}}
\DeclareMathSymbol{\gena}{\mathord}{letters}{"3C}
\DeclareMathSymbol{\genb}{\mathord}{letters}{"3E}
\theoremstyle{plain}
\newtheorem{theorem}{Theorem}[section]
\newtheorem{corollary}[theorem]{Corollary}
\newtheorem{proposition}[theorem]{Proposition}
\newtheorem*{proposition*}{Proposition}
\theoremstyle{remark}
\theoremstyle{definition}
\newtheorem{example}[theorem]{Example}
\newtheorem{remark}[theorem]{Remark}
\newcommand{\mcF}{\mathcal{F}}
\newcommand{\vphi}{\varphi}
\renewcommand{\geq}{\geqslant}
\renewcommand{\leq}{\leqslant}
\renewcommand{\le}{\leq}
\renewcommand{\ge}{\geq}
\begin{document}

\title{On short sums of trace functions}

\date{\today}

\author[\'E. Fouvry]{\'Etienne Fouvry}
\address{  Laboratoire de Math\'ematiques d'Orsay, Univ. Paris-Sud, Universit\' e Paris-Saclay, 
   91405 Orsay Cedex\\France}
\email{etienne.fouvry@math.u-psud.fr}

\author[E. Kowalski]{Emmanuel Kowalski}
\address{ETH Z\"urich -- D-MATH\\
  R\"amistrasse 101\\
  CH-8092 Z\"urich\\
  Switzerland} \email{kowalski@math.ethz.ch}

\author[Ph. Michel]{Philippe Michel}
\address{EPFL Mathgeom-TAN, Station 8, CH-1015 Lausanne, Switzerland }
\email{philippe.michel@epfl.ch}

\author[C.S. Raju]{Chandra Sekhar Raju}
\address{Department of Mathematics,
450 Serra Mall, Stanford,
California 94305, USA}
\email{chandras@stanford.edu}

\author[J. Rivat]{Jo\"el Rivat}
\address{Institut de Math{\' e}matiques de Marseille, Case 907\\
  Universit{\' e} d'Aix-Marseille\\ 163, avenue de Luminy\\ 13288
  Marseille Cedex 9\\ France}
\email{joel.rivat@univ-amu.fr}

\author[K. Soundararajan]{Kannan Soundararajan}
\address{Department of Mathematics,
450 Serra Mall, Stanford,
California 94305, USA}
\email{ksound@stanford.edu}

\thanks{Ph. M. was partially supported by the SNF (grant
  200021-137488); Ph.\ M.\ and E.\ K.\ were also partially supported
  by a DFG-SNF lead agency program grant (grant 200021L\_153647);
  \'E. F. thanks ETH Z\"urich, EPF Lausanne and the Institut
  Universitaire de France for financial support. Ph. M. thanks
  Stanford University, ETH Z\"urich and Caltech for providing
  excellent working conditions.  K. S. was partially supported by NSF
  grant DMS 1001068, and a Simons Investigator grant from the Simons
  Foundation. CS. R. was supported by B. C. and E. J. Eaves Stanford
  Graduate Fellowship. J. R. was supported by the ANR (grant 
  ANR-10-BLAN 0103).}

\subjclass[2010]{11L07,11L05,11T23}

\keywords{Short exponential sums, trace functions, van der Corput
  lemma, completion method, Riemann Hypothesis over finite fields}

\begin{abstract}
  We consider sums of oscillating functions on intervals in cyclic
  groups of size close to the square root of the size of the group. We
  first prove non-trivial estimates for intervals of length slightly
  larger than this square root (bridging the ``Poly\'a-Vinogradov
  gap'' in some sense) for bounded functions with bounded Fourier
  transforms. We then prove that the existence of non-trivial
  estimates for ranges slightly below the square-root bound is stable
  under the discrete Fourier transform. We then give applications
  related to trace functions over finite fields.
\end{abstract}

\maketitle

\section{Introduction and statement of results} 

Consider a positive integer $m\geq 1$. Let
$\vphi: \Zz/m\Zz \rightarrow \Cc$ be a complex-valued function defined
modulo $m$, which we also view as a function $\Zz\lra \Cc$ by
composing with the reduction modulo $m$. Let $I\subset \Zz$ be an
interval of cardinality $|I|$ at most equal to $m$. One of the most
important general problems in analytic number theory is to estimate the
partial sum
$$
S(\vphi,I):=\sum_{n\in I}\vphi(n).
$$
\par
Of course, one has the obvious upper bound
\begin{equation}\label{trivialbound}
  |S(\vphi,I)|\leq \|\vphi\|_\infty |I|,
\end{equation}
where 
$$
\|\vphi\|_\infty=\max_{n\in\Zz/m\Zz}|\vphi(n)|
$$
but the goal is usually to improve significantly this bound when
$\vphi$ is an oscillating function.  Both the \emph{quality} of the
improvement (i.e., the saving compared with the trivial bound for
given $I$) and the \emph{range} of possibilities for $I$ that give
rise to non-trivial bounds are important. We will mostly focus on this
second aspect in this paper.


\subsection{The P{\' o}lya-Vinogradov range}

There exists a very general method to estimate incomplete sums, based
on Fourier theory on $\Zz/m\Zz$. This is called the
\emph{P{\' o}lya-Vinogradov} or \emph{completion} method.
\par
For any function $\vphi:\Zz\ra\Cc$ which is $m$-periodic, we define
its normalized Fourier transform $\what{\vphi}\,:\, \Zz\ra \Cc$ by
\begin{equation}\label{defFT}
  \what \vphi(h)=\frac{1}{\sqrt{m}}\sum_{x\mods m}\vphi(x)
  e\Bigl( \frac{hx}{m}\Bigr),
\end{equation}
for all $h\in\Zz$, where $e(t):=\exp(2\pi i{t})$.  We shall also find it convenient to use the notation $e_m(t)$ to denote $e(t/m) = e^{2\pi i t/m}$.  
\par
Given an interval $I\subset \Zz$ with cardinality at most $m$, let
$\tilde{I}$ be its image in $\Zz/m\Zz$, with characteristic function
denoted by ${\mathbf 1}_{\tilde{I}}$. The discrete Plancherel formula
gives the identity
$$
S(\vphi,I)=\sum_{h\mods m}\what \vphi(h) {\overline { \what {\mathbf
  1}_{\tilde{I}}(h)}},
$$
so that
\begin{equation}\label{PV}
  |S(\vphi,I)|\leq \|\what
  \vphi\|_\infty\sum_{h\mods m}\Bigl|\what {\mathbf 1}_{\tilde{I}}(h)\Bigr|\leq
  \|\what \vphi\|_\infty m^{1/2}\log(3m)
\end{equation}
since it is well known that
$$
\sum_{h\mods m}\Bigl|\what {\mathbf 1}_{\tilde{I}}(h)\Bigr|\leq
m^{1/2}\log(3m)
$$
for any $m\geq 1$ and any interval $I$. 
\par
Therefore if we assume that $\|\what{\vphi}\|_\infty\leq c$ for some
constant $c$, the bound \eqref{PV} will be non-trivial as long as
\begin{equation}\label{PVrange}
|I|\geq c m^{1/2}\log (3m)	
\end{equation}
which we call the \emph{P{\' o}lya-Vinogradov range}.
\par
The problem of estimating non-trivially such sums over shorter
intervals is crucial for many applications, for instance to study
averages or subconvexity estimates of $L$-functions \cites{Bu1,BFKMM}
(see also~\cite{k-sawin} for a recent very different situation where
this range determines the solution of a natural problem).   Here we study this problem, starting 
with a general result which gives a modest improvement over the P{\' o}lya-Vinogradov
range~\eqref{PVrange}.

\begin{theorem}\label{thmnolog} 
  For any interval $I$ in $\Zz$ with $\sqrt{m}<|I|\leq m$, we have
\begin{equation}\label{eq-slide}
  \Bigl|\sum_{n\in I}\varphi(n)\Bigr|\leq c\sqrt{m}\log
  \Bigl(\frac{4e^8|I|}{m^{1/2}}\Bigr)
\end{equation}
where $c=\max(\|\varphi\|_{\infty},\|\what{\varphi}\|_{\infty})$.
\end{theorem}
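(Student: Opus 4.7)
The plan is to replace the indicator $\mathbf{1}_I$ by a trapezoidal smoothing, apply Plancherel, and exploit the faster Fourier decay of the smoothed function, which arises from its convolution structure. Fix a positive integer $T$ to be chosen of size $\sqrt{m}$, and assume for now that $|I| + T - 1 \le m$. Set $J = [0, T-1]$ and define, as a function on $\Zz/m\Zz$,
$$
w(n) = \frac{1}{T}(\mathbf{1}_J \star \mathbf{1}_I)(n) = \frac{1}{T}\sum_{t=0}^{T-1}\mathbf{1}_I(n - t),
$$
where $\star$ denotes cyclic convolution (no wrap-around occurs by our assumption). A direct computation shows that $w$ is trapezoidal with $0 \le w \le 1$, equal to $1$ on a subinterval of $I$ of length $|I| - T + 1$, and tapering linearly to $0$ over intervals of length $T - 1$ at each end; in particular $w - \mathbf{1}_I$ is bounded by $1$ and supported on a set of cardinality $2T-1$, so the cost of this smoothing is at most $(2T-1)\|\varphi\|_\infty$.

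By Plancherel and the convolution identity $\what{w}(h) = (\sqrt{m}/T)\,\what{\mathbf{1}_J}(h)\what{\mathbf{1}_I}(h)$,
$$
\Bigl|\sum_n \varphi(n)w(n)\Bigr| \le \|\what\varphi\|_\infty \sum_h |\what w(h)|.
$$
The standard geometric-sum formula combined with $|\sin(\pi h/m)| \ge 2|h|/m$ for $|h| \le m/2$ yields
$$
|\what w(h)| \le \frac{1}{T\sqrt{m}}\min\Bigl(T, \frac{m}{2|h|}\Bigr)\min\Bigl(|I|, \frac{m}{2|h|}\Bigr) \qquad (0 < |h| \le m/2),
$$
together with $|\what w(0)| = |I|/\sqrt{m}$. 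Splitting the range of nonzero $|h|$ into the three regions $|h| \le m/(2|I|)$, $m/(2|I|) < |h| \le m/(2T)$, and $|h| > m/(2T)$, one sees that the first and third each contribute $O(\sqrt{m})$ (being essentially of the shape $\sum 1$ and $\sum 1/h^2$ respectively), while the middle range gives the logarithmic main term $\sqrt{m}\log(|I|/T) + O(\sqrt{m})$ coming from the harmonic sum $\sum 1/|h|$. Altogether $\sum_h |\what w(h)| \le \sqrt{m}\log(|I|/T) + A\sqrt{m}$ for some explicit absolute constant $A$.

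Setting $c = \max(\|\varphi\|_\infty, \|\what\varphi\|_\infty)$, choosing $T = \lceil\sqrt{m}\rceil$ so that $2T-1 \le 2\sqrt{m}+1$, and combining the smoothing and Fourier estimates yields $|S(\varphi, I)| \le c\sqrt{m}(\log(|I|/\sqrt{m}) + A')$ for an explicit constant $A'$; since $|I| \ge \sqrt{m}$, one can then absorb this additive constant into the logarithm and rewrite the bound as $c\sqrt{m}\log(4 e^8 |I|/\sqrt{m})$, provided $A'$ is checked to be at most $\log(4 e^8) = 8 + \log 4$. The boundary case $|I| > m - T$, where the cyclic convolution would wrap around, is handled separately by writing $S(\varphi, I) = \sqrt{m}\,\what\varphi(0) - \sum_{n \in \Zz/m\Zz\setminus I}\varphi(n)$ and bounding each piece directly by $O(c\sqrt{m})$. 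The main technical obstacle is the careful numerical accounting in the three-range Fourier estimate, needed to pin down the constant $A'$ tightly enough to fit inside $\log(4 e^8)$.
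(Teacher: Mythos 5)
Your proposal is essentially the same argument as the paper's, just phrased differently. The paper averages the Plancherel identity over shifted intervals $I_r = r + I$ for $1 \le r \le R = \lceil\sqrt m\rceil$, at a cost of $|S(\varphi,I_r) - S(\varphi,I)| \le 2cr$; summing over $r$ and dividing by $R$ produces exactly the trapezoidal smoothing $\frac{1}{R}\sum_{r=1}^R \mathbf{1}_{I_r}$, which is what you call $w = \frac{1}{T}\mathbf{1}_J\star\mathbf{1}_I$. Your Fourier-side bound $|\what w(h)| \le \frac{1}{T\sqrt m}\min(T,\frac{m}{2|h|})\min(|I|,\frac{m}{2|h|})$ is identical to the product $|\what I(t)|\cdot|\sum_{1\le r\le R}e_m(-rt)|$ in the paper's display, and the three-range split (both $\min$'s at their caps; middle harmonic range; both at $m/(2|h|)$) is the same. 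So this is not a genuinely different route.

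Two small remarks. First, your convolution formulation introduces a wrap-around issue when $|I| + T - 1 > m$ that the paper's shifted-interval formulation sidesteps (each $I_r$ still has cardinality $\le m$, so $\mathbf{1}_{\tilde I_r}$ on $\Zz/m\Zz$ poses no problem); your complementation fix for that boundary case is correct. Second, you explicitly defer the numerical verification that the accumulated additive constants fit inside $\log(4e^8) = 8 + \log 4$. That accounting does work out — roughly $2$ from the smoothing cost $(2T-2)c/T$, $1$ from $h=0$, $1$ from the first range, $2$ from the tail $\sum 1/h^2$, and about $1$ from the harmonic-sum boundary terms, with the remaining slack absorbing the $T=\lceil\sqrt m\rceil$ vs.\ $\sqrt m$ discrepancy — but to actually prove the stated inequality you would need to carry it out, exactly as the paper does in its verification of \eqref{eqn2.2} and the final display. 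You should also note, as the paper does, that for small $m$ (the paper uses $m < 64$) the claim is weaker than the trivial bound $c|I|$, so one may assume $m$ large enough that $\lceil\sqrt m\rceil \le m/4$.
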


The estimate ~(\ref{eq-slide}) is non-trivial as soon as $I$ is of
length $\gg \sqrt m$, and we may view this result as ``bridging the
P{\' o}lya-Vinogradov gap''. As we will see in
Section~\ref{sec-slide}, the proof is very simple, but such results do
not seem to have been noticed before.  While we have given an explicit bound in 
Theorem \ref{thmnolog}, we have not made any attempt to optimize constants, and a more 
careful smoothing argument (for example, using the Beurling-Selberg trigonometric polynomials 
as in \cite{FroSo}) would provide better explicit constants.

Before continuing, we note that Theorem \ref{thmnolog} is essentially best
possible, since for $\varphi(n)=e({n^2}/m)$ the sum over
$1\leq n\leq m^{1/2}$ is $\gg m^{1/2}$.  Hence any improvement beyond
Theorem~\ref{thmnolog} requires some input on the function $\varphi$.

\subsection{Beyond the P{\' o}lya-Vinogradov range}\label{sec-fourier-t}

Our next result is concerned with the problem of going significantly
below the range $|I|\geq \sqrt{m}$ for suitable functions
$\vphi$. There are only few results of this type already known, the
most famous being the Burgess bound, when $\varphi(n)=\chi(n)$ is a
primitive Dirichlet character.
\par
We are currently unable to obtain results in great generality, but we
will obtain a number of new cases by proving a general principle that,
roughly speaking, states that if  the partial sums of a function $\varphi$ has substantial 
cancellation near the P{\' o}lya-Vinogradov range, then so does its discrete Fourier transform.  We now formulate this principle precisely, giving in the next section several applications.  
\par
Suppose throughout that $\vphi: \Zz/m\Zz \lra \Cc$ is a periodic function with 
$$ 
c= \max(\Vert \vphi \Vert_{\infty}, \Vert \what \vphi \Vert_{\infty} ). 
$$ 
For any $N \geq 1$, we define the sum $S(\varphi, N)$ by the formula
$$
S(\varphi, N) =\sum_{1\leq n \leq N } \varphi (n),
$$
and if $N\leq -1$, then we put
$$
S(\varphi, N) =\sum_{N\leq n\leq -1} \varphi (n).
$$
Next define, for any $1\le N \le m/2$, 
 \begin{equation} 
 \label{defDelta} 
 \Delta(\vphi,N) =\frac{1}{\sqrt{m}} +  \max_{m/2\ge t\ge 1} \Big\{
 \min\Big(\frac 1{ct}, \frac 1{cN}\Big) 
 \Big(|S(\vphi,t)| + |S(\vphi,-t)|\Big) \Big\}.  
 \end{equation} 
 From the definition, it is clear that $\Delta(\vphi,N)$ is a
 non-increasing function of $N$, and also that $N\Delta(\vphi,N)$ is a
 non-decreasing function of $N$.  Further, the definition immediately
 gives
 \begin{equation} 
 \label{defDelta2} 
 \max_{t\le N}\Big( |S(\vphi,t)| + |S(\vphi,-t)|\Big) \le cN \Delta(\vphi,N), 
 \end{equation} 
 and 
 \begin{equation} 
 \label{defDelta3} 
 \max_{m/2\ge t \ge N} \frac{1}{t} \Big( |S(\vphi,t)| + |S(\vphi,-t)|\Big) \le  c\Delta(\vphi,N).
\end{equation} 
With this notation, our main theorem transfers bounds for
$\Delta(\vphi,m/N)$ into bounds for $\Delta(\what\vphi,N)$.
 
\begin{theorem} \label{fouriertransfer} Let $m$, $\varphi$, $c$ and $\Delta$ be as above.  For $2\le N\le m/2$ we have 
\begin{equation}
 \label{eqn3.12} 
 |S(\what \vphi,N)| + |S(\what\vphi, -N)| \ll c \sqrt{N} m^{\frac 14} \Delta\Big(\vphi, \frac{m}{N}\Big)^{\frac 12},
 \end{equation} 
 and 
$$ 
\Delta(\what \vphi,N) \ll \frac{m^{\frac 14}}{\sqrt{N} } \Delta \Big( \vphi, \frac mN\Big)^{\frac 12}. 
$$ 
In particular 
\begin{equation} 
\label{eqn1.11} 
\Delta(\what \vphi, \sqrt{m}) \ll \Delta(\vphi,\sqrt{m})^{\frac 12}. 
\end{equation} 
\end{theorem}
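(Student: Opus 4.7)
The plan is to first prove the main bound~\eqref{eqn3.12} and then derive both the bound on $\Delta(\what\varphi, N)$ and the particular case~\eqref{eqn1.11} from it by formal manipulations. By Plancherel,
$$
S(\what\varphi, N) \;=\; \frac{1}{\sqrt{m}} \sum_{x \bmod m} \varphi(x)\, K_N(x), \qquad K_N(x) := \sum_{n=1}^{N} e_m(xn),
$$
so the task reduces to bounding $\sum_x \varphi(x) K_N(x)$. The Dirichlet-type kernel $K_N$ satisfies $|K_N(x)| \ll \min(N, m/|x|)$ on representatives $x \in (-m/2, m/2]$, with transition at $|x| \sim m/N$. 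This explains why the hypothesis is formulated in terms of $\Delta(\varphi, m/N)$.

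I would split the sum according to $|x| \le m/N$ (the central range, where $|K_N| \sim N$) and $|x| > m/N$ (the tail, where $|K_N| \ll m/|x|$), and in each region apply Abel summation. On the central range use $|S(\varphi, t)| \le c(m/N)\Delta(\varphi, m/N)$ from~\eqref{defDelta2}, together with $|K_N(x+1) - K_N(x)| \ll N^2/m$ coming from $e_m(n) - 1 = O(n/m)$. On the tail use $|S(\varphi, t)| \le c t\, \Delta(\varphi, m/N)$ from~\eqref{defDelta3}, together with the sharper estimate $|K_N(x+1) - K_N(x)| \ll N/|x|$ obtained from the closed form $K_N(x) = e_m(x(N+1)/2) \sin(\pi Nx/m)/\sin(\pi x/m)$. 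A direct Abel summation yields only a bound linear in $\Delta(\varphi, m/N)$; to extract the exponent $\tfrac{1}{2}$, I would combine this with a Cauchy--Schwarz step against the Parseval identity $\sum_x |K_N(x+1) - K_N(x)|^2 \ll N^3/m$ (applied to the Fourier transform of $(e_m(n)-1)\mathbf{1}_{[1,N]}(n)$) and the bound $\sum_{n=1}^N |\what\varphi(n)|^2 \le c^2 m$, interpolating the partial-sum information against the $L^2$ behaviour of the kernel differences. The same treatment applied to $x < 0$ handles $|S(\what\varphi, -N)|$.

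The second inequality follows from~\eqref{eqn3.12} by unpacking~\eqref{defDelta} and invoking the monotonicity properties of $\Delta(\varphi, \cdot)$ noted just below that equation. For $t \le N$ one has $m/t \ge m/N$, hence $\Delta(\varphi, m/t) \le \Delta(\varphi, m/N)$, so~\eqref{eqn3.12} applied at $t$ gives $(|S(\what\varphi, t)| + |S(\what\varphi, -t)|)/(cN) \ll (m^{1/4}/\sqrt{N})\, \Delta(\varphi, m/N)^{1/2}$. For $t \ge N$, the fact that $N\Delta(\varphi, N)$ is non-decreasing gives $(m/t)\Delta(\varphi, m/t) \le (m/N)\Delta(\varphi, m/N)$, i.e.\ $\Delta(\varphi, m/t)^{1/2} \le (t/N)^{1/2}\, \Delta(\varphi, m/N)^{1/2}$, and~\eqref{eqn3.12} then yields $(|S(\what\varphi, t)| + |S(\what\varphi, -t)|)/(ct) \ll (m^{1/4}/\sqrt{N})\, \Delta(\varphi, m/N)^{1/2}$. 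The summand $1/\sqrt{m}$ in the definition of $\Delta$ is absorbed since $1/\sqrt{m} \ll m^{1/4}/\sqrt{N}$ throughout the range $N \le m/2$. Specializing $N = \sqrt{m}$ gives~\eqref{eqn1.11}, for which $m^{1/4}/\sqrt{\sqrt{m}} = 1$.

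The main obstacle is the Cauchy--Schwarz step used to extract the $\Delta^{1/2}$ saving: the straightforward Abel summation alone produces a bound linear in $\Delta$, which is off by a power of $m$ at the balance point $\Delta \sim N/\sqrt{m}$. The careful pairing of the partial-sum information on $\varphi$ against the $L^2$ estimate for the kernel differences is what produces the correct interpolation and yields the final exponent $\tfrac{1}{2}$ on $\Delta$.
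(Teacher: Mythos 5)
The reduction via Plancherel, the pointwise kernel bounds $|K_N(x+1)-K_N(x)| \ll \min(N^2/m, N/|x|)$, the Parseval estimate $\sum_x |K_N(x+1)-K_N(x)|^2 \ll N^3/m$, and the entire second half of your argument (deducing the bound on $\Delta(\what\vphi,N)$ from~\eqref{eqn3.12} via the monotonicity of $\Delta(\vphi,\cdot)$ and of $N\Delta(\vphi,N)$, then specialising to $N=\sqrt{m}$) are all correct and match the paper's logic for that part.

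However, the central step — extracting the exponent $\frac12$ on $\Delta$ — is not actually supplied, and the mechanism you sketch does not produce it. After Abel summation with the \emph{sharp} Dirichlet kernel, the tail $m/N < x \le m/2$ contributes $\ll \frac{1}{\sqrt{m}}\sum_{m/N<x\le m/2}\frac{N}{x}\cdot cx\,\Delta(\vphi,m/N) \ll cN\sqrt{m}\,\Delta(\vphi,m/N)$: each summand is $\asymp cN\Delta$ and there are $\asymp m$ of them, because the decay $N/x$ of the kernel differences is exactly cancelled by the growth $x\Delta$ of the partial sums. This is hopelessly large (for $\Delta\sim1/\sqrt m$ it is already $cN$, the trivial bound). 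The Cauchy--Schwarz step you propose does not rescue this: pairing $\sum_x |K_N(x+1)-K_N(x)|^2 \ll N^3/m$ against the only available $\Delta$-free bound $\sum_x |S(\vphi,x)|^2 \ll c^2m^3$ yields $\ll cN^{3/2}m^{1/2}\Delta$, and the direct Cauchy--Schwarz $\frac{1}{\sqrt m}\|\vphi\|_2\|K_N\|_2 \ll c\sqrt{mN}$ is off by $m^{1/4}$ from the target even when $\Delta$ is $O(1)$; no threshold split between these regimes closes the gap, since $\sum_x|S(\vphi,x)|^2$ admits no useful $\Delta$-independent bound. Taking a geometric mean of two separate upper bounds is in any case only as good as the smaller of them and gives no interpolation gain.

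The missing idea is a \emph{smoothing parameter} to optimise over, which the sharp cutoff does not provide. The paper replaces $\mathbf{1}_{[1,N]}$ by a smooth bump $\Psi$ that equals $1$ on $[2U, N/m]$ and transitions over intervals of length $U$; this costs a smoothing error $\ll cmU$, but the Fourier coefficients $\what\Psi(h)$ then decay like $(U|h|)^{-A}$ for any $A$ once $|h|>1/U$, so after partial summation the range $|h|>1/U$ is negligible and the dangerous middle range contributes only $\ll cN\Delta/(\sqrt{m}U)$ rather than $cN\sqrt{m}\Delta$. Balancing $cmU$ against $cN\Delta/(\sqrt{m}U)$ at $U = N^{1/2}m^{-3/4}\Delta^{1/2}$ is what produces $c\sqrt{N}m^{1/4}\Delta^{1/2}$; this optimisation is the genuine source of the square root, and it has no analogue in the sharp-kernel computation.
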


If we apply the bound of \eqref{eqn1.11} twice, we see that $\Delta(\what \vphi,\sqrt{m}) 
\ll \Delta(\vphi,\sqrt{m})^{\frac 12} \ll \Delta(\what \vphi, \sqrt{m})^{\frac 14}$, so that some loss 
in precision has occurred.  One may wonder if this loss in precision could be removed, perhaps 
by defining some other quantity rather than $\Delta$.  





\subsection{Applications}

Our methods apply best to functions modulo $m$ that are pointwise
small and whose Fourier transform is also small, in a precise
quantitative sense.  In analytic number theory, there is a plentiful
supply of such functions which arise naturally in applications: they
are given by \emph{Frobenius trace functions} modulo $m$.
\par
These functions originate in algebraic geometry, and their analytic
properties have been investigated systematically in recent years by
Fouvry, Kowalski and Michel especially (see
\cites{FKM1,FKM2,FKM3,counting-sheaves,gowers-norms} for instance). We
will recall briefly the definition in Section~\ref{sec-add}, referring
to~\cite{pisa} for a longer survey.

Basic examples of trace functions lead to the following application of
Theorem~\ref{thmnolog}, where we denote as usual by $\bar{x}$ the
inverse of $x$ modulo $p$ for $x\in\Fpt$.



\begin{corollary}[Equidistribution over short intervals]
\label{cor-rational}
Let $\beta$ be any function defined on positive integers such that
$1\leq \beta(p)\ra +\infty$ as $p\ra +\infty$, and for all $p$ prime,
let $I_p$ be an interval in $\Fp$ of cardinality
$|I_p|\geq p^{1/2}\beta(p)$.
\par
\emph{(1)} Let $f_1$, $f_2\in\Zz[X]$ be monic polynomials such that
$f=f_1/f_2\in\Qq(X)$ is not a polynomial of degree $\leq 1$. Then for
$p$ prime, the set of fractional parts
$$
\Bigl\{\frac{f(n)}{p}\Bigr\},\quad\quad n\in I_p,
$$
becomes equidistributed in $[0,1]$ with respect to Lebesgue measure as
$p\ra +\infty$, where $f(n)=f_1(n)\overline{f_2(n)}$ is computed in
$\Fp$ and defined to be $0$ if $n\mods p$ is a pole of $f$.
\par
\emph{(2)} For $p$ prime and $n\in\Fp$ (resp. $n\in\Fpt$), define the
Birch (resp. Kloosterman) angles $\theta_{3,p}(n)$
\emph{(}resp. $\theta_{-1,p}(n)$\emph{)} in $[0,\pi]$ by the
relations
\begin{align*}
\rmB_3(n)=  \frac{1}{\sqrt{p}}\sum_{x\in\Fp} e_p(x^3+nx) 
  =2\cos\theta_{3,p}(n),\\
  \frac{1}{\sqrt{p}}\sum_{x\in\Fpt}e_p(\bar{x}+nx)
  =2\cos\theta_{-1,p}(n).
\end{align*}
Then the angles $\{\theta_{3,p}(n),\ n\in I_p\}$,
$\{\theta_{-1,p}(n),\ n\in I_p-\{0\}\}$ become equidistributed in
$[0,\pi]$ with respect to the Sato-Tate measure
$2\pi^{-1}\sin^2\theta d\theta$.
\end{corollary}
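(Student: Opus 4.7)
The strategy for both parts is Weyl's equidistribution criterion followed by Theorem~\ref{thmnolog}. For part (1), fix a nonzero integer $h$ and set $\varphi_p(n)=e_p(hf(n))$, with the convention $\varphi_p(n)=0$ at the poles of $f\bmod p$. For part (2), fix $k\ge 1$ and set $\varphi_p(n)=U_k(\cos\theta_{3,p}(n))$ (resp.\ $U_k(\cos\theta_{-1,p}(n))$), where $U_k$ is the Chebyshev polynomial of the second kind; the functions $U_k$, $k\ge 1$, span the orthogonal complement of the constants in the Sato--Tate $L^2$ space, so by Weyl's criterion both equidistribution statements reduce to
$$\sum_{n\in I_p}\varphi_p(n)=o(|I_p|)\qquad (p\to\infty)$$
for every such fixed $h$ (resp.\ $k$).

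Applying Theorem~\ref{thmnolog} with $m=p$ and using $|I_p|\ge\sqrt{p}\,\beta(p)$, we get
$$\Bigl|\sum_{n\in I_p}\varphi_p(n)\Bigr|\le c\sqrt{p}\,\log\!\Bigl(\tfrac{4e^{8}|I_p|}{\sqrt{p}}\Bigr),$$
where $c=\max(\|\varphi_p\|_\infty,\|\what\varphi_p\|_\infty)$. Writing $|I_p|=\sqrt{p}\,\beta$ with $\beta\ge\beta(p)$, and using that $\log\beta/\beta$ is decreasing for $\beta>e$, dividing by $|I_p|$ produces a ratio $\le c\log\beta(p)/\beta(p)=o(1)$. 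Thus it suffices to bound $c$ by a constant independent of $p$ (though possibly depending on $f,h$ or on $k$). The bound $\|\varphi_p\|_\infty=O(1)$ is immediate since $|e_p(\cdot)|\le 1$ and $|U_k|\le k+1$ on $[-1,1]$.

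The substantive step is the uniform bound on $\|\what\varphi_p\|_\infty$. In part (1) one has
$$\what\varphi_p(y)=\frac{1}{\sqrt{p}}\sum_{x\bmod p}e_p(hf(x)+yx),$$
and the Weil bound for exponential sums of rational functions gives $|\what\varphi_p(y)|=O_f(1)$ uniformly in $y$: the rational function $hf(X)+yX$ is non-constant (since $h\ne 0$ and $f$ is not a polynomial of degree $\le 1$), its degree and pole count are bounded in terms of $f$ alone, and it is not of Artin--Schreier form $g^{p}-g+c$ for $p$ large enough since $f$ is fixed over $\Qq$. In part (2), $\varphi_p$ is the Frobenius trace function of the $k$th symmetric power $\symk^{k}\mcB_3$ of the Birch sheaf (resp.\ $\symk^{k}\HYPK_2$ of the Kloosterman sheaf). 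By Deligne's theorem, its discrete Fourier transform is again a trace function whose sup norm is $O_k(1)$, provided no geometric irreducible component is isomorphic to an Artin--Schreier sheaf $\mcL_{\psi(aX)}$; this non-degeneracy is guaranteed by the work of Katz, which shows that these symmetric power sheaves are geometrically irreducible with large classical monodromy group, hence cannot contain an Artin--Schreier summand. This verification in part (2) is the main technical ingredient; once the uniform bound on $\|\what\varphi_p\|_\infty$ is in place in both parts, the corollary follows from the two displays above.
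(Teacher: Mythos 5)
Your proposal is correct and follows essentially the same route as the paper: Weyl's criterion (using Chebyshev polynomials $U_k$ as the spanning family for the Sato--Tate $L^2$ space in part (2)) combined with Theorem~\ref{thmnolog}, with the decisive input being that both $\|\varphi_p\|_\infty$ and $\|\what\varphi_p\|_\infty$ are bounded uniformly in $p$ via Deligne's theory. The only cosmetic difference is that in part (1) you invoke the Weil bound for rational-function exponential sums directly for each frequency $y$, whereas the paper packages the same fact by constructing the Artin--Schreier sheaf $\mcL_{\psi(hf(X))}$, bounding its conductor explicitly, and appealing to the general inequality~\eqref{tracefctbound}; both are instances of the same theorem of Deligne and lead to the same uniform constant.
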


\begin{remark}
  We use the terminology ``Birch angle'' as analogous for Kloosterman
  angles. Historically, Birch~\cite[\S 3]{birch} mentioned the problem
  of the distribution of these angles as a problem similar to the
  Sato-Tate distribution of the number of points on elliptic curves
  over finite fields.  This Sato-Tate equidistribution was
  subsequently first proved by Livn\'e~\cite{livne}.
\end{remark}

See Section~\ref{sec-add} for the proofs, which are direct
applications of the Weyl criterion and the
estimate~(\ref{eq-slide}). These statements can be generalized
considerably to other summands, as will be clear from the proof in
Section~\ref{sec-add}; there are also variants for geometric
progressions instead of intervals, which are discussed in
Section~\ref{sec-mult}.

Below the P{\' o}lya-Vinogradov range, we obtain:

\begin{corollary}\label{cor-below-pv}
  Let $p$ be a prime number, let $P(X)\in\Fp[X]$ be a non-zero polynomial
  and let $\chi:\Fpt\ra\Cc^\times$ be a multiplicative character.
Assume that either $\chi$ is non-trivial or that $\deg P\geq 3$. Let
\begin{equation}\label{eq-function}
\varphi(x)=\chi(x)e_p(P(x))
\end{equation}
and
$$
\what\vphi(n)=\frac{1}{\sqrt{p}}\sum_{x\mods
  p}\chi(x)e_p(P(x)+{nx}).
$$
There exists $\delta>0$, depending only on $\deg P$ such that for any
interval $I\subset\Rr$ with $|I| \ge p^{\frac 12-\delta}$ we have
$$
\Bigl|\sum_{n\in I}\what{\varphi}(n)\Bigr|\ll |I|^{1-\delta}
$$
where the implied constant depends only on $\deg(P)$. 
\end{corollary}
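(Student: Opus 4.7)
The plan is to apply Theorem \ref{fouriertransfer} to the function $\varphi(x) = \chi(x)e_p(P(x))$ modulo $m = p$, feeding in a Burgess-type estimate for the partial sums of $\varphi$ as the main external input.

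First I would check that $c := \max(\|\varphi\|_\infty, \|\what\varphi\|_\infty)$ is bounded in terms of $\deg P$ alone. The sup-norm of $\varphi$ is at most $1$, and for $\what\varphi$ the complete sum $\sum_x \chi(x)e_p(P(x)+nx)$ is non-degenerate under our hypothesis (either $\chi$ non-trivial, or $\deg P \geq 3$ so that $P(x) + nx$ still has degree $\geq 3$). Weil's theorem then gives $|\what\varphi(n)| \leq c(\deg P)$ for all $n$.

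Next, the key input: a Burgess-type bound
\begin{equation*}
 |S(\varphi, t)| \ll t^{1-\eta} p^{\epsilon}
\end{equation*}
for some $\eta = \eta(\deg P) > 0$, all $t \geq p^{1/4+\epsilon}$, uniformly in the starting point of the interval. When $\chi$ is non-trivial and $P$ is low-degree this reduces to the classical Burgess bound; when $\chi = 1$ and $\deg P \geq 3$ it follows from Heath-Brown's Burgess-type estimates for polynomial exponential sums; and in the general mixed case it follows from the Burgess-type bounds for trace functions established by Fouvry, Kowalski and Michel. Combining this with the trivial estimate $|S(\varphi,t)|\leq t$ for $t \leq p^{1/4+\epsilon}$ and a case analysis in the definition \eqref{defDelta} at $N' = p^{1/2+\delta_1}$ (splitting into the ranges $t \leq p^{1/4+\epsilon}$, $p^{1/4+\epsilon} \leq t \leq N'$, and $t > N'$), one obtains
\begin{equation*}
 \Delta(\varphi, p^{1/2+\delta_1}) \ll p^{-\eta'}
\end{equation*}
for some $\eta' = \eta'(\deg P, \delta_1) > 0$, provided $\delta_1$ is chosen small enough relative to $\eta$ and $\epsilon$.

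Setting $N = p/N' = p^{1/2-\delta_1}$ and applying Theorem \ref{fouriertransfer} then gives
\begin{equation*}
 |S(\what\varphi, N)| \ll \sqrt{N}\, p^{1/4}\, \Delta(\varphi, p/N)^{1/2} \ll \sqrt{N}\, p^{1/4 - \eta'/2} \ll N^{1-\delta}
\end{equation*}
for $\delta > 0$ chosen small enough in terms of $\eta'$ (a direct computation shows $\delta < \eta'/2$ suffices). An arbitrary interval $I = [M+1, M+L]$ is reduced to the case $[1,L]$ by the Fourier identity $\what\varphi(n+M) = \what{\tilde\varphi}(n)$, where $\tilde\varphi(x) = \chi(x)e_p(P(x) + Mx)$ has the same shape and the same degree as $\varphi$ (and the Burgess input is uniform in the phase $Mx$). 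The main obstacle is purely the availability of the Burgess-type estimate above with uniformity in $\chi$ and in the starting point of the interval, and with a power saving depending only on $\deg P$; once this is taken as a black box the deduction of Corollary \ref{cor-below-pv} from Theorem \ref{fouriertransfer} is essentially routine.
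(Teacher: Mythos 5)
Your overall framework is the right one and matches the paper: bound the partial sums of $\varphi$, deduce a bound on $\Delta(\varphi, m/N)$ by splitting the range of $t$, and then apply Theorem~\ref{fouriertransfer}. The handling of arbitrary intervals via the shift $P(x)\mapsto P(x)+Mx$ is also correct. However, there is a genuine gap in the claimed black-box input, and one of your cited sources does not exist.

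First, the claim that ``in the general mixed case it follows from the Burgess-type bounds for trace functions established by Fouvry, Kowalski and Michel'' is incorrect: no such general Burgess-type bound for trace functions exists in the literature (it is a well-known open problem). This cannot be used as a black box. Second, for the case $\chi$ trivial and $\deg P\geq 3$, there is no ``Heath-Brown Burgess-type estimate for polynomial exponential sums'' that gives a power saving starting at $t\geq p^{1/4+\epsilon}$; Burgess's method relies essentially on complete multiplicativity and does not apply to pure polynomial phases. What the paper actually uses there is the classical Weyl bound
$$
\Bigl\vert \sum_{1\leq h \leq H } e_m(P(h)) \Bigr\vert \ll H^{1+\varepsilon}\Bigl(\tfrac{1}{H}+\tfrac{m}{H^d}\Bigr)^{1/2^{d-1}},
$$
which only gives a power saving for $t$ in the vicinity of $p^{1/2}$ and above (roughly $t\gg p^{1/d}$), not down to $p^{1/4+\epsilon}$. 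That is nevertheless enough: as you implicitly note in your case analysis, when estimating $\Delta(\varphi,p^{1/2+\delta_1})$ the trivial bound $|S(\varphi,t)|\leq t$ already suffices for all $t\leq p^{1/2+\delta_1-\eta'}$, so nontrivial cancellation is only needed for $t$ near and above $p^{1/2}$. For $\chi$ non-trivial, the paper uses the Heath-Brown--Pierce mixed character sum estimate \cite{HBP}, which does give a genuine Burgess-type range and is uniform in the shift. So the right statement of the argument is exactly a split into the two cases (trivial $\chi$ via Weyl, non-trivial $\chi$ via Heath-Brown--Pierce), which is what the paper does in Theorems~\ref{fourierpoly} and~\ref{bingo}; the ``unified Burgess-type bound from $p^{1/4+\epsilon}$'' you invoke is stronger than needed and, as cited, not available.
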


The basic input here is the work of Weyl, Burgess, Enflo,
Heath-Brown, Chang and Heath-Brown--Pierce on short sums with summands
of the type $\chi(x)e(P(x)/p)$. 


Corollary \ref{cor-below-pv} has partial consequences to the distribution properties
of the cubic Birch sums in shorter intervals than is allowed in
Corollary~\ref{cor-rational}:

\begin{corollary}\label{cor-distrib-birch}
Let $p$ be a prime number and let $\rmB_3(n)$, the cubic Birch sum, be as in Corollary \ref{cor-rational}. 
There exists $\delta>0$, such that for any
interval $I\subset\Rr$ with $|I| \ge p^{\frac 12-\delta}$ we have
\begin{equation}\label{eq-mom1-birch}
\sum_{n\in I}\rmB_3(n)\ll |I|^{1-\delta}
\end{equation}
and
\begin{equation}\label{eq-mom2-birch}
\sum_{n\in I}|\rmB_3(n)|^2=|I|+O(|I|^{1-\delta})
\end{equation}
where the implied constants are absolute.  
Further, for such intervals $I$, and any $0\le t <\frac 12$ we 
have 
$$ 
\min \Big( \sum_{\substack { n\in I \\ \rmB_3(n)>t}} 1,
\sum_{\substack{n\in I\\ \rmB_3(n) <-t}} 1 \Big) \ge \Big(
\frac{1-2t}{4(2-t)}+o(1)\Big) |I|,
$$ 
and for any $0\le t<1$ we have 
$$ 
\sum_{\substack{n \in I \\ |\rmB_3(n)| > t }} 1 \ge \Big(
\frac{1-t^2}{4-t^2} + o(1) \Big) |I|.
$$ 
 \end{corollary}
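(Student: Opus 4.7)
Part (1) follows immediately from Corollary~\ref{cor-below-pv}: we have $\rmB_3(n)=\what\vphi(n)$ for $\vphi(x)=e_p(x^3)$, so that corollary applies with $\chi$ trivial and $P(X)=X^3$ (of degree $3$) to give \eqref{eq-mom1-birch}.

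The main work is in (2), which the plan is to reduce to another application of Corollary~\ref{cor-below-pv}. Opening the square and summing in $n$ first,
\[
\sum_{n\in I}\rmB_3(n)^2 = \frac{1}{p}\sum_{x,y\mods p}e_p(x^3-y^3)\sum_{n\in I}e_p(n(x-y)),
\]
the change of variables $u=x-y$ contributes $|I|$ from $u=0$, while for $u\neq 0$ and $p\geq 5$ a completion of the square in $y$ gives the explicit quadratic Gauss sum
\[
\sum_{y\mods p}e_p((y+u)^3-y^3) = \varepsilon_p\Bigl(\tfrac{3u}{p}\Bigr)\sqrt{p}\, e_p(u^3/4),
\]
with $\varepsilon_p\in\{1,i\}$ and $\bigl(\tfrac{\cdot}{p}\bigr)$ the Legendre symbol. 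Multiplying by $\sum_{n\in I}e_p(nu)$ and swapping the order of summation converts the off-diagonal piece, up to a unimodular scalar of absolute value $p^{-1/2}\cdot\sqrt{p}=1$, into $\sum_{n\in I}\what f(n)$ where $f(u)=\bigl(\tfrac{u}{p}\bigr)\,e_p(u^3/4)$. Since $f$ has exactly the shape \eqref{eq-function} with $\chi$ the non-trivial quadratic character and $\deg P=3$, Corollary~\ref{cor-below-pv} yields the required bound $O(|I|^{1-\delta})$, establishing \eqref{eq-mom2-birch}.

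The distribution inequalities then follow by elementary moment arithmetic using that $\rmB_3(n)\in[-2,2]$. For $N^+(t):=|\{n\in I:\rmB_3(n)>t\}|$, the polynomial $g(x)=(x-t)(x+2)$ is $\leq 0$ on $[-2,t]$ and $\leq 4(2-t)$ on $[t,2]$, so
\[
4(2-t)\,N^+(t)\geq\sum_{n\in I}g(\rmB_3(n)) = \sum_{n\in I}\rmB_3(n)^2+(2-t)\sum_{n\in I}\rmB_3(n)-2t|I|,
\]
which by \eqref{eq-mom1-birch} and \eqref{eq-mom2-birch} equals $(1-2t)|I|+o(|I|)$; the symmetric choice $(x+t)(x-2)$ handles $N^-(t)$. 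For the $|\rmB_3|$ inequality, the polynomial $k(x)=x^2-t^2$ is $\leq 0$ on $[-t,t]$ and $\leq 4-t^2$ on $\{|x|>t\}$, hence
\[
(4-t^2)\cdot|\{n\in I:|\rmB_3(n)|>t\}| \geq \sum_{n\in I}(\rmB_3(n)^2-t^2)=(1-t^2)|I|+o(|I|),
\]
which gives the stated density.

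The only real subtlety is the algebraic identification in step (2): the pleasant and non-obvious fact that the off-diagonal second moment of $\rmB_3$ is \emph{itself} a sum of the form to which Corollary~\ref{cor-below-pv} applies — in particular, with a non-trivial Legendre character naturally emerging from the Gauss-sum evaluation, so that even the edge case $\deg P=3$ with trivial $\chi$ is avoided. Once this reduction is in place, no new analytic input is required.
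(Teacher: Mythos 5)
Your proof is correct and follows essentially the same path as the paper: part (1) is Theorem~\ref{fourierpoly} (the trivial-character case of Corollary~\ref{cor-below-pv}), and for part (2) the paper likewise opens the square, evaluates the quadratic Gauss sum, and recognizes $\sum_{n\in I}|\rmB_3(n)|^2-|I|$ as (a unimodular constant times) a partial sum of the Fourier transform of $u\mapsto\bigl(\tfrac{u}{p}\bigr)e_p(u^3/4)$, to which Theorem~\ref{bingo} applies. (The paper phrases this by computing the Fourier transform of $\psi(n)=|\rmB_3(n)|^2-1$ and invoking Fourier inversion rather than swapping sums, but it is the same computation.) One cosmetic improvement worth noting: for the final distribution inequalities, your use of the quadratic weights $(x-t)(x+2)$, $(x+t)(x-2)$, and $x^2-t^2$ gives a uniform Chebyshev-style presentation that reaches exactly the paper's constants $\tfrac{1-2t}{4(2-t)}$ and $\tfrac{1-t^2}{4-t^2}$ in a single line each, in place of the paper's two-step rearrangement via the auxiliary bound on $\sum_{\rmB_3\in[0,t]}1$.
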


 Using delicate work of Bourgain and Garaev on Kloosterman fractions
 \cite{BoGa}, \cite{BouPell}, we obtain corresponding, but weaker,
 results for short sums of Kloosterman sums:

\begin{corollary}\label{BouGarFou}
  Let $p$ be a prime number. For $k\geq 1$ an integer and $(a,p)=1$
  some parameter, let
$$
\varphi(x)=e_p(ax^{-k})
$$
for $x\in\Fpt$ and $\varphi(0)=0$. Let
$$
\what\vphi(n)=\frac{1}{\sqrt{p}}\sum_{x\mods
  p}e_p(ax^{-k}+{nx})
$$ 
be its Fourier transform.
\par
For $k=1$ and $k=2$, there exists $\delta>0$ such that for all $x\ge \sqrt{p} (\log p)^{-\delta}$ we have
$$
\Bigl|\sum_{1\leq n\leq x}\what\vphi(n)\Bigr| \ll x(\log x)^{-\delta},
$$
 where the implied constant
is absolute.
\end{corollary}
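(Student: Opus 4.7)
The plan is to deduce Corollary \ref{BouGarFou} from Theorem \ref{fouriertransfer} applied with $m=p$, combined with the deep partial-sum estimates of Bourgain--Garaev \cite{BoGa} (for $k=1$) and Bourgain \cite{BouPell} (for $k=2$). For $\varphi(x) = e_p(a x^{-k})$ one has $\|\varphi\|_\infty \leq 1$, and the Weil bound gives $|\widehat\varphi(n)| \ll_k 1$ uniformly in $n$, so the constant $c$ of Theorem \ref{fouriertransfer} is $O_k(1)$. The key external input I will use is that there exists $\delta_0 > 0$ such that
\begin{equation*}
\Bigl|\sum_{1\le n\le t} e_p(a n^{-k})\Bigr| + \Bigl|\sum_{-t \le n \le -1} e_p(a n^{-k})\Bigr|\ll t (\log p)^{-\delta_0}
\end{equation*}
uniformly for $\sqrt{p}(\log p)^{-\delta_0} \le t \le p/2$; this cancellation is precisely what confines the corollary to $k\in\{1,2\}$, as no analogous savings are currently available for higher $k$.

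Fix $x$ with $\sqrt{p}(\log p)^{-\delta} \leq x \leq p/2$ for some small $\delta \in (0,\delta_0)$ to be chosen, and set $N = p/x$, so that $2 \leq N \leq \sqrt{p}(\log p)^{\delta}$. The next step is to show that $\Delta(\varphi,N) \ll (\log p)^{-\eta}$ with $\eta = \min(\delta_0-\delta,\delta_0) > 0$, by splitting the maximum over $t$ in \eqref{defDelta} into three pieces. For $1 \leq t \leq \sqrt{p}(\log p)^{-\delta_0}$, the trivial bound $|S(\varphi,\pm t)| \leq t$ together with $\min(1/(ct), 1/(cN)) = 1/(cN)$ gives a contribution $\ll t/N \ll (\log p)^{-(\delta_0-\delta)}$. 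For $\sqrt{p}(\log p)^{-\delta_0} \leq t \leq N$, the input bound combined with $1/(cN)$ yields $\ll (t/N)(\log p)^{-\delta_0} \leq (\log p)^{-\delta_0}$. For $N \leq t \leq p/2$, the input bound combined with $1/(ct)$ gives $\ll (\log p)^{-\delta_0}$. The $1/\sqrt{p}$ term in \eqref{defDelta} is negligible.

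Plugging this into Theorem \ref{fouriertransfer} yields
\begin{equation*}
|S(\widehat\varphi,x)| \ll c\sqrt{x}\, p^{1/4}\, \Delta(\varphi, p/x)^{1/2} \ll \sqrt{x}\, p^{1/4}(\log p)^{-\eta/2}.
\end{equation*}
Since the hypothesis $x \geq \sqrt{p}(\log p)^{-\delta}$ forces $p^{1/4} \leq \sqrt{x}(\log p)^{\delta/2}$, this gives $|S(\widehat\varphi,x)| \ll x(\log p)^{\delta/2-\eta/2}$; choosing $\delta$ sufficiently small relative to $\delta_0$ renders the exponent strictly negative, and since $\log x \asymp \log p$ throughout the range, the claimed bound $x(\log x)^{-\delta'}$ follows for some $\delta' > 0$. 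The remaining range $p/2 < x \leq p$ is handled by $p$-periodicity of $\widehat\varphi$ and the identity $\sum_{n \bmod p}\widehat\varphi(n) = \sqrt{p}\,\varphi(0) = 0$, which expresses $S(\widehat\varphi,x)$ as the negative of a shorter partial sum already estimated. The main obstacle is not the structural argument above, which is a mechanical unpacking of Theorem \ref{fouriertransfer} and the definition of $\Delta$, but lies entirely in the highly nontrivial input on partial Kloosterman-fraction sums furnished by \cite{BoGa, BouPell}.
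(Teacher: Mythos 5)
Your overall strategy is the same as the paper's: apply Theorem~\ref{fouriertransfer} with the Bourgain--Garaev/Bourgain input on short sums of $e_p(an^{-k})$. However there is a genuine gap in the middle step, where you claim $\Delta(\vphi,N)\ll (\log p)^{-\eta}$ for \emph{all} $N=p/x$ with $2\le N\le \sqrt{p}(\log p)^{\delta}$. That claim is false for small $N$. Concretely, the $t=1$ term in \eqref{defDelta} already gives $\Delta(\vphi,2)\ge \frac{1}{2c}\bigl(|\vphi(1)|+|\vphi(-1)|\bigr)=1/c$, which is $\asymp 1$, not $o(1)$. The error sits in your case (a): the identity $\min(1/(ct),1/(cN))=1/(cN)$ requires $t\le N$, and the inequality $t/N\ll(\log p)^{-(\delta_0-\delta)}$ with $t$ ranging up to $\sqrt{p}(\log p)^{-\delta_0}$ requires a \emph{lower} bound $N\gg\sqrt{p}(\log p)^{-\delta}$; but your hypotheses only supply the \emph{upper} bound $N\le\sqrt{p}(\log p)^{\delta}$, and $N$ can be as small as $2$. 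So the uniform bound on $\Delta(\vphi,N)$ does not hold across your stated range $\sqrt{p}(\log p)^{-\delta}\le x\le p/2$, and the subsequent application of \eqref{eqn3.12} is not justified for $x$ near $p/2$.

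The repair is exactly what the paper does: restrict the application of Theorem~\ref{fouriertransfer} to $x\le\sqrt{p}(\log p)^{O(1)}$ (equivalently $N=p/x\ge\sqrt{p}(\log p)^{-O(1)}$, comfortably above the Bourgain--Garaev threshold, so $\Delta(\vphi,N)\ll(\log p)^{-\delta_0}$ genuinely holds there), and then dispose of larger $x$ by the plain P\'olya--Vinogradov bound $|S(\what\vphi,x)|\ll\sqrt{p}\log p\ll x(\log p)^{-1}$. Alternatively, one can note that for $x\ge \sqrt p(\log p)^{2\delta'}$ the trivial bound $\Delta\ll 1$ already yields $\sqrt{x}\,p^{1/4}\ll x(\log p)^{-\delta'}$ in \eqref{eqn3.12}, so no cancellation in $\Delta$ is needed there; but either way, the case of large $x$ must be argued separately, and your write-up omits it. Once that case split is added, the argument matches the paper's proof.
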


All these applications of Theorem~\ref{fouriertransfer} are found
in Section~\ref{sec-below-pv1}.


\subsection*{Acknowledgments.} We thank the referee for his or her
careful reading of the text, and especially for pointing out a slip in
the proof of Theorem~\ref{fourierpoly} in the first version of this
paper.

\section{Bridging the P{\' o}lya--Vinogradov gap}\label{sec-slide}

\subsection{The basic inequality: Proof of Theorem~\ref{thmnolog}}



We may assume that $m\geq 64$ since otherwise the trivial bound $cm$
is better than the claim.  In that case we have
$\lceil \sqrt{m}\rceil\leq m/4$.
\par
For any $r\geq 0$, we denote by $I_r=r+I$ the interval $I$ shifted by
$r$, and by $\what{I}_r$ the Fourier transform of its characteristic
function:
$$
\what{I}_r(t)=\frac{1}{\sqrt{m}}
\sum_{x\in I_r}e_m(tx).
$$
\par
By the discrete Plancherel formula, we have
$$
S(\varphi,I_r)=\sum_{t\in\Zz/m\Zz} \what\vphi(t)\overline{\what{I}_r(t)}=
\sum_{t\in\Zz/m\Zz}\what\vphi(t)\overline{\what{I}(t)}e_m (-rt)
$$
for any $r$, where $\what{I}=\what{I}_0$. Moreover, we have
$$
|S(\varphi,I_r)-S(\varphi,I)|\leq 2cr
$$
since $ |\varphi(x) | \leq c$ for all $x$. 
\par
Let $R=\lceil \sqrt{m}\rceil$. Since $\sqrt{m}<|I|$, we see that $R$
is an integer with $\sqrt{m} \le R \le |I|$.  Thus,
$$
|I|\geq R\geq \sqrt{m}\geq m/|I|\geq 1,\quad\quad m\geq 4R
$$ 
(the last inequality because $m\geq 64$, as assumed at the beginning
of the proof).  Summing our identity for $S(\vphi,I_r)$ for
$1\leq r\leq R$, we obtain
\begin{equation}\label{eq-1}
RS(\varphi,I)= \sum_{-m/2<t\leq m/2}\what\vphi(t) \overline{\what{I}(t)}
\sum_{1\leq r\leq R}e_m(-{rt}) +E,
\end{equation}
where $|E|\leq 2cR^2$.
\par
Now, the Fourier transform $\what{I}$ satisfies
$$
|\what{I}(t)|\leq \frac{1}{\sqrt{m}}\min\Bigl(|I|,\frac{m}{2|t|}\Bigr)
$$
for $-m/2\leq t\leq m/2$, and  similarly, we have
$$
\Bigl|\sum_{1\leq r\leq R}e_m (-rt) \Bigr| \leq
\min\Bigl(R,\frac{m}{2|t|}\Bigr).
$$
Using these bounds in \eqref{eq-1}, together with $R\leq |I|$ and
$|\what\vphi(t)|\leq c$, we get
\begin{equation*}
  R |S(\varphi,I)| 
  \leq c\Bigl\{\sum_{|t|\leq m/(2|I|)}
    R\frac{|I|}{m^{1/2}} +
    \sum_{m/(2|I|)< |t|\leq m/(2R)} R\frac{m^{1/2}}{2|t|}
    +\sum_{m/(2R) <|t|\leq
    m/2} \frac{m^{3/2}}{4t^2} \Bigr\} + 2cR^2.
\end{equation*}
\par
The first sum above is at most
$$
\frac{R|I|}{\sqrt{m}}\Bigl(\frac{m}{|I|}+1\Bigr)\leq 2R\sqrt{m}.
$$
Since $m\ge 4R$, the third term is at most
$$
\frac{m^{3/2}}{2}\sum_{t>m/(2R)}\frac{1}{t^2}\leq
\frac{m^{3/2}}{2}\frac{1}{m/(2R)-1} =\frac{m^{3/2}R}{m-2R} \le 2 R\sqrt{m}. 
$$
We claim that the middle term is
\begin{equation} 
\label{eqn2.2} 
R\sqrt{m}\sum_{m/(2|I|)< t \leq m/(2R)} \frac{1}{t} \le R \sqrt{m} \log \Big( \frac{4|I|}{\sqrt{m}}\Big),
\end{equation} 
from which it follows that
$$ 
|S(\vphi,I)| \le 4c\sqrt{m} + 2cR + c\sqrt{m} \log \Big( \frac{4|I|}{\sqrt{m}}\Big) 
\le c \sqrt{m}\log \Big( \frac{4|I|}{\sqrt{m}} \Big) + 8c\sqrt{m},
$$ 
as desired.  

To verify the claim \eqref{eqn2.2}, note that if $|I| <m/4$ then the 
quantity in question is 
$$ 
\le R\sqrt{m} \log \Big( \frac{m/(2R)}{m/(2|I|)-1} \Big) \le R\sqrt{m} \log \Big( \frac{m/(2R)}{m/(4|I|)}\Big), 
$$ 
which verifies \eqref{eqn2.2} in this range.  
If $m/4\leq |I|< m/2$, then we may use the bound 
$$
R\sqrt{m} \sum_{1< t\le m/(2R)} \frac 1t \le R\sqrt{m}\log\Bigl(\frac{m}{2R}\Bigr)\leq
R\sqrt{m}\log\Bigl(\frac{4|I|}{\sqrt{m}}\Bigr), 
$$
which again verifies
\eqref{eqn2.2}. 
Finally, if $|I|\geq m/2$, then
$$
R\sqrt{m} \sum_{t\le m/(2R)} \frac 1t \leq R\sqrt{m}\Bigl(1+\log \Bigl(\frac{m}{2R}\Bigr)\Bigr) \leq
R\sqrt{m}\log\Bigl(\frac{4|I|}{\sqrt{m}}\Bigr), 
$$
which completes our verification of \eqref{eqn2.2}.  


\subsection{Applications to trace functions, I: the additive case}\label{sec-add}

We now recall the definition and give some basic examples of trace
functions before proving Corollary~\ref{cor-rational}.  As is usual,
we will restrict our attention to prime moduli; the extension of the
results to squarefree moduli at least is a matter of applying the
Chinese Remainder Theorem.

Thus let $p$ be a prime number. Given a prime $\ell\not=p$, we fix an
isomorphism $\iota\,:\, \bar{\Qq}_{\ell}\simeq \Cc$, and we use it
implicitly to identity any $\ell$-adic number with a complex number. A
\emph{Fourier sheaf} modulo $p$ is defined to be a middle-extension
$\bQl$-adic sheaf $\sheaf{F}$ on $\mathbf{A}^1_{\Fp}$, that is
pointwise pure of weight $0$ and of Fourier type, i.e., none of its
geometric Jordan-H\"older components is isomorphic to an
Artin-Schreier sheaf $\sheaf{L}_{\psi}$ for some additive character
$\psi$.

\begin{remark} 
  Note that in contrast with the definition of Katz~\cite{katz-gskm},
  we impose the weight $0$ condition instead of stating it separately.
\end{remark}

The \emph{(Frobenius) trace function} of $\mcF$ is the function
$\Zz/p\Zz\lra \Cc$ defined by
\begin{equation}\label{twistedphase}
 \vphi=\iota(\Tr(\frob_{x,\Fp}|\mcF))	
\end{equation}
for any $x\in\Fp$. It is a deep property, due to Deligne, that the
Fourier transform of the trace function of $\mcF$ is also a trace
function, namely that of the sheaf-theoretic (normalized) Fourier transform
of $\mcF$.

The complexity of the trace function is controlled by the (analytic)
conductor of the sheaf $\mcF$, which is defined as
$$
\cond(\sheaf{F})=\rank(\sheaf{F})+n(\sheaf{F})+\sum_{x\in
  S(\sheaf{F})}\swan_x(\sheaf{F}),
$$
where $\rank(\sheaf{F})$ is the rank of $\sheaf{F}$,
$S(\sheaf{F})\subset \mathbf{P}^1(\bar{\Ff}_p)$ is the set of
singularities of $\sheaf{F}$, the integer $n(\sheaf{F})\geq 0$ is the
cardinality of $S(\sheaf{F})$ and $\swan_x$ denotes the Swan conductor at such a
singularity. The conductor is a non-negative integer, and from its properties (see~\cite[Prop. 8.2]{FKM1} for \eqref{condbound}) 
 we have the following inequalities
\begin{equation}\label{rankcondbound}
\|\vphi\|_\infty\leq\rank(\mcF)\leq\cond(\mcF)	
\end{equation}
and
\begin{equation}\label{condbound}
	\cond(\what\mcF)\leq 10\cond(\mcF)^2.
\end{equation}
Thus we obtain
\begin{equation}\label{tracefctbound}
  \max(\|\vphi\|_\infty,\|\what\vphi\|_\infty)\leq 10\cond(\mcF)^2.
\end{equation} 
\par
This means that, for instance, Theorem~\ref{thmnolog} or
Theorem~\ref{fouriertransfer} can be applied efficiently to a
sequence of trace functions modulo primes $p\ra +\infty$, provided the
conductor of the underlying sheaves is bounded independently of $p$.

\begin{example}
  Let $f_1$, $f_2\not=0$, $g_1$, $g_2\not=0$ be monic polynomials with
  integer coefficients such that $f_1$ is coprime to $f_2$ and $g_1$
  is coprime to $g_2$. Let $p$ be a prime number and $\chi$ a
  multiplicative character modulo $p$. If $\chi$ is trivial, we adopt the convention 
  that $f_1=f_2=1$.  If $\chi$ is non-trivial, we assume that none of the zeros or poles of $f_1/f_2$ has order divisible by  the order of $\chi$.  Define the rational functions $f=f_1/f_2$ and
  $g=g_1/g_2$, and let
$$
\varphi(n)=\chi\bigl(f(n)\bigr)e_p\bigl(g(n)\bigr)
$$
for $n\in\Zz$ such that 
$$
f_1(n)f_2(n)g_2(n)\not=0\mods{p},
$$
where $f(n)=f_1(n)\overline{f_2(n)}$ and
$g(n)=g_1(n)\overline{g_2(n)}$ are computed in $\Fp$, and let
$$
\varphi(n)=0
$$
if $f_1(n)f_2(n)g_2(n)=0\mods{p}$.
\par
For all primes $p$ large enough, the poles of $g$ are of order $< p$.  
For any such prime, the function $\varphi$ is the trace function of a
middle-extension sheaf $\sheaf{F}$ with
$$
\cond(\sheaf{F})\ll \deg(f_1)+\deg(f_2)+\deg(g_1)+\deg(g_2),
$$
where the implied constant is absolute. If $f$ is not constant modulo
$p$ or if $g$ is not a polynomial of degree at most $1$, this sheaf is a
Fourier sheaf.
\end{example}

\begin{proof}[Proof of Corollary~\ref{cor-rational}]
  (1) We can certainly assume that $\beta(p)<p^{1/2}$ for all $p$.  By
  the Weyl criterion, we must show that, for any fixed integer
  $h\not=0$, and for the interval $I_p$, the sums
$$
\frac{1}{|I_p|}\sum_{n\in I_p}e_p\bigl(hf(n)\bigr)
$$
tend to $0$ as $p\ra +\infty$. For a given $p$, and a suitable
$\ell$-adic non-trivial additive character $\psi$ of $\Fp$, there
exists a rank $1$ sheaf 
$\sheaf{F}=\sheaf{L}_{\psi(hf(X))}$
with trace function given by
$$
\vphi(x)=e_p(hf(x))
$$
for all $x\in\Fp$. This is a middle-extension sheaf modulo $p$, which
is 
pointwise pure of weight $0$. For $p$ large enough so that $hf(X)$ is
not a polynomial of degree $\leq 1$, this sheaf is a Fourier
sheaf. Its conductor satisfies
$$
\cond(\sheaf{F})\leq 1+(1+\deg(f_2))+\sum_{x\text{ pole of } f_2}
\mathrm{ord}_{x}(f_2)+\deg(f_1)\ll 1
$$
for all $p$ large enough (the first $1$ is the rank, the singularities
are at most at poles of $f_2$ and at $\infty$, the Swan conductor at a
pole of $f_2$ is at most the order of the pole, and at infinity it is
at most the order of the pole of $f$ at infinity, which is at most the
degree of $f_1$). Hence, there exists $c\geq 1$ such that the trace
function $\vphi$ satisfies
$$
\max(\| \vphi \|_{\infty},\|\what \vphi\|_{\infty})\leq
c
$$
for all large $p$. By Theorem \ref{thmnolog}, we get
$$
\frac{1}{|I_p|}\sum_{n\in I_p}e_p(hf(n))=\frac{1}{|I_p|}
S(\vphi,I_p) \ll \frac{\sqrt{p}}{|I_p|}\log
\Bigl(\frac{|I_p|}{p^{1/2}}\Bigr) \ll 
\frac{\log\beta(p)}{\beta(p)}\ra 0
$$
by assumption.
\par
(2) Let $\theta_p=\theta_{-1,p}$ or $\theta_{3,p}$, depending on
whether one considers Kloosterman sums or Birch sums. Using the Weyl
criterion, and keeping some notation from (1), it is enough to show
that for any fixed $d\geq 1$, we have
$$
\lim_{p\rightarrow +\infty} \frac{1}{|I_p|}\sum_{n\in
  I_p}U_d(2\cos\theta_p(n))=0
$$
where $U_d\in\Zz[X]$ is the Chebyshev polynomial defined by 
$$
U_d(2\cos\theta)= \sin((d+1)\theta) / \sin\theta .
$$
\par
By the theory of the Fourier transform of sheaves (see the exposition
in~\cite[Ch. 8]{katz-gskm} and the survey in~\cite[\S 10.3]{FKM1}),
the function
$$
\varphi(x)=U_d(2\cos\theta_p(x))
$$
is the trace function of an $\ell$-adic irreducible middle-extension
Fourier sheaf (the symmetric $d$-th power of the rank $2$ Kloosterman
sheaf or of the Fourier transform of the sheaf
$\mathcal{L}_{\psi(x^3)}$, which is also of rank $2$, both of which
are irreducible); this sheaf has rank $d+1\geq 2$ on the affine line
over $\Fp$, and its conductor is bounded by a constant depending only
on $d$, and not on $p$. It is therefore a Fourier sheaf with trace
function satisfying
$$
\max(\|\varphi\|_{\infty},\|\what{\varphi}\|_{\infty})\leq c
$$
for some $c$ depending only on $d$, and hence the desired limit holds
again by a direct application of Theorem \ref{thmnolog}. 
\end{proof}

Another interesting and somewhat similar application is the following:

\begin{proposition}[Polynomial residues]\label{pr-pol-residues}
  Let $\beta$ be a function defined on integers such that $1\leq
  \beta(m)\ra +\infty$ as $m\ra +\infty$. Let $f\in \Zz[X]$ be a
  non-constant monic polynomial. For all primes $p$ large enough,
  depending on $f$ and $\beta$, and for any interval $I_p$ modulo $p$
  of size $|I_p|\geq p^{1/2}\beta(p)$, there exists $x\in I_p$ such
  that $x=f(y)$ for some $y\in \Fp$. In fact, denoting by $P$ the set
  $f(\Fp)$ of values of $f$, the number of such $x$ is $\sim \delta_f
  |I_p|$ as $p\ra +\infty$, where $\delta_f=|P|/p$.
\end{proposition}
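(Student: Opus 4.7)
Let $d = \deg f$ and set $N_p(x) = |f^{-1}(x)(\Fp)|$ for $x \in \Fp$, so that $P = \{x : N_p(x)\geq 1\}$ and $|P| \geq p/d$ (hence $\delta_f \geq 1/d$), since $\sum_x N_p(x) = p$ and $N_p(x) \leq d$ everywhere. The plan is to apply Theorem~\ref{thmnolog} to trace functions related to $\mathbf{1}_P$ via the pointwise inclusion-exclusion identity
\begin{equation*}
\mathbf{1}_P(x) = \sum_{k=1}^d (-1)^{k-1} \binom{N_p(x)}{k},
\end{equation*}
which comes from $\sum_{k=1}^m (-1)^{k-1}\binom{m}{k} = \mathbf{1}_{m\geq 1}$ applied to $m = N_p(x) \leq d$.

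For each $k \in \{1,\ldots,d\}$, the quantity $k!\binom{N_p(x)}{k}$ equals the number of ordered $k$-tuples of pairwise distinct $\Fp$-preimages of $x$ under $f$, and is therefore the Frobenius trace function of the pushforward sheaf $\Hc_k := (\pi_k)_* \bQl$, where $\pi_k \colon V_k \to \Aa^1_{\Fp}$ is the map $(y_1,\ldots,y_k) \mapsto f(y_1)$ from the variety
\begin{equation*}
V_k = \{(y_1,\ldots,y_k) \in \Aa^k : y_i \text{ pairwise distinct},\ f(y_1) = \cdots = f(y_k)\}.
\end{equation*}
For $p > d$, $\pi_k$ is tame at $\infty$ and at its critical values, so $\Hc_k$ has conductor bounded in terms of $d$ alone. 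Decomposing its geometric constant part off, write $\Hc_k = \bQl^{\oplus a_k} \oplus \mcF_k$ with $\mcF_k$ carrying no trivial geometric Jordan--H\"older factor; this produces $k!\binom{N_p(x)}{k} = a_k + \tau_k(x)$ with $\tau_k = \Tr(\mcF_k)$ pointwise bounded by $O_d(1)$. Since the geometric monodromy of $\Hc_k$ factors through the finite Galois group of $f(Y) - T$ over $\bFp(T)$, all irreducible constituents of $\mcF_k$ are tame (no Artin--Schreier), and Deligne's Riemann hypothesis bounds $\|\what{\tau_k}\|_\infty = O_d(1)$: for $h = 0$ via $\sum_x \tau_k(x) \ll_d \sqrt p$ (since $\mcF_k$ has no trivial component), and for $h \neq 0$ the normalized coefficient is a character sum $p^{-1/2}\sum_{V_k(\Fp)} e_p(hf(y_1))$ with non-trivial Artin--Schreier twist pulled back to $V_k$, bounded by $O_d(1)$ via Weil. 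Theorem~\ref{thmnolog} then yields
\begin{equation*}
\Bigl|\sum_{x \in I_p} \tau_k(x)\Bigr| \ll_d \sqrt p\, \log\bigl(|I_p|/\sqrt p\bigr).
\end{equation*}

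Summing over $k$ with the alternating signs of the inclusion-exclusion gives
\begin{equation*}
|I_p \cap P| = |I_p| \sum_{k=1}^d \frac{(-1)^{k-1} a_k}{k!} + O_d\bigl(\sqrt p\, \log(|I_p|/\sqrt p)\bigr),
\end{equation*}
and applying the same identity on all of $\Fp$ together with $\sum_{x \in \Fp}\tau_k(x) \ll_d \sqrt p$ identifies the coefficient as $\sum_k (-1)^{k-1}a_k/k! = \delta_f + O_d(1/\sqrt p)$; absorbing $|I_p|/\sqrt p \leq \sqrt p$ into the error yields
\begin{equation*}
|I_p \cap P| = \delta_f\, |I_p| + O_d\bigl(\sqrt p\, \log(|I_p|/\sqrt p)\bigr).
\end{equation*}
For $|I_p| \geq p^{1/2}\beta(p)$ with $\beta(p) \to +\infty$ this error is $O_d(\sqrt p \log\beta(p)) = o(\delta_f |I_p|)$ because $\delta_f \geq 1/d$, simultaneously proving the asymptotic $|I_p \cap P| \sim \delta_f|I_p|$ and, \emph{a fortiori}, the non-emptiness of $I_p \cap P$ for $p$ large enough. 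The main technical obstacle is the sheaf-theoretic step: ensuring $\Hc_k$ has conductor bounded independently of $p$ (this requires $p > d$ so that $f$, hence $\pi_k$, is tame at $\infty$ and at its critical values), and justifying the decomposition $\Hc_k = \bQl^{\oplus a_k} \oplus \mcF_k$ with $\mcF_k$ being a Fourier sheaf. This is automatic here because the geometric monodromy is a finite quotient of $\Gal(f(Y)-T/\bFp(T))$ and finite-order representations are tame, but it must be verified carefully rather than assumed.
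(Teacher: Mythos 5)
Your argument is correct, but it takes a genuinely different (and more self-contained) route than the paper. The paper's proof simply invokes an off-the-shelf result, \cite[Prop.\ 6.7]{FKM2}, which asserts the existence of a decomposition $\mathbf{1}_P = c_1 + \sum_{i\neq 1} c_i \varphi_i$ with $c_1 = \delta_f + O(p^{-1/2})$, the $\varphi_i$ being trace functions of tame, geometrically isotypic, geometrically nontrivial middle-extension sheaves with conductor bounded in terms of $\deg f$; it then feeds each $\varphi_i$ into Theorem~\ref{thmnolog}. You instead \emph{construct} such a decomposition from scratch: the inclusion--exclusion identity $\mathbf{1}_P(x) = \sum_{k=1}^d (-1)^{k-1}\binom{N_p(x)}{k}$ reduces the problem to the trace functions $k!\binom{N_p(x)}{k}$ of the pushforwards $(\pi_k)_*\bQl$ along the fiber-power curves $V_k$, and you split off the geometric constant part by hand. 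This buys a concrete, verifiable description of the sheaves and of the main-term coefficient $\delta_f = \sum_k(-1)^{k-1}a_k/k! + O_d(p^{-1/2})$, at the cost of having to verify directly that the $V_k$ are curves of bounded complexity with $\pi_k$ tame for $p>d$, that the difference between $(\pi_k)_*\bQl$ and its middle extension is punctual and bounded, and that the Artin--Schreier pullback $e_p(hf(y_1))$ is non-constant on each component of $V_k$ so that the Weil bound applies. Those verifications are essentially what \cite[Prop.\ 6.7]{FKM2} packages, so the two arguments converge; the paper's version is shorter by delegation, yours is more transparent but would need those sheaf-theoretic checks spelled out a bit more carefully (in particular the bounded-conductor and middle-extension points) to be fully rigorous.
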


Here again, the interest of the result is when $\beta(m)$ is smaller
than $\log m$. However, it seems likely that this distribution
property should also be true for much shorter intervals (as in the
well-known conjecture for quadratic (non)-residues).

\begin{proof}
  Let $\varphi$ be the characteristic function of the set $P$ of
  values $f(y)$ for $y\in\Fp$. We must show that, for $p$ large
  enough, we have
$$
\sum_{x\in I_p}\varphi(x)\sim \delta_f |I_p|
$$
(which in particular implies that the left-hand side is $>0$ for $p$
large enough.)
\par
By~\cite[Prop. 6.7]{FKM2}, if $p$ is larger than $\deg(f)$, there
exists a decomposition
$$
\varphi(x)=\sum_i c_i \varphi_i(x)
$$
where the number of terms in the sum and the $c_i$ are bounded in
terms of $\deg(f)$ only, and where $\varphi_i$ is the trace function
of a \emph{tame} $\ell$-adic middle-extension sheaf $\sheaf{F}_i$ with
conductor bounded in terms of $\deg(f)$ only. Moreover, $\sheaf{F}_1$
is the trivial sheaf with trace function equal to $1$, all others are
geometrically non-trivial and geometrically isotypic, and
$$
c_1=\delta_f+O(p^{-1/2}),
$$
where $\delta_f=|P|/p$ and the implied constant depends only on
$\deg(f)$. In particular, $\sheaf{F}_i$, being tame and geometrically
isotypic and non-trivial, is a Fourier sheaf for $i\not=1$. We also
note that $\delta_f\gg 1$ for primes $p>\deg(f)$.
\par
This decomposition implies
$$
\sum_{x\in I_p}\varphi(x)=c_1|I_p| +\sum_{i\not=1} c_i
S(\vphi_i,I_p)= \delta_f|I_p| +\sum_{i\not=1} c_i
S(\vphi_i,I_p)+O(p^{-1/2}|I_p|).
$$
\par
Since the $\sheaf{F}_i$, for $i\not=1$, are Fourier sheaves, we get by Theorem \ref{thmnolog}
$$
S(\vphi_i,I_p)\ll \sqrt{p}\log
\Bigl(\frac{|I_p|}{p^{1/2}}\Bigr) \ll |I_p|
\frac{\log\beta(p)}{\beta(p)},
$$
for each $i\not=1$, where the implied constant depends only on
$\deg(f)$. Hence we obtain
$$
\sum_{x\in I_p}\varphi(x)\sim \delta_f|I_p|
$$
uniformly for $p>\deg(f)$, since $\beta(p)\ra+\infty$, which gives the
result.
\end{proof}

\subsection{Applications to trace functions, II: the multiplicative
  case}\label{sec-mult}

We consider now a different application of the basic inequality: for a
prime $p$, we look at the values of trace functions modulo $p$ on the
multiplicative group $A=\Fpt\simeq \Zz/(p-1)\Zz$.  Fixing a generator
$g$ of $A$, this means that we are now looking at sums over
\emph{geometric progressions} $xg^n$ for $n$ in some interval $I$ in
$\Zz/m\Zz=\Zz/(p-1)\Zz$.  Such sums have also been considered by Korobov,
for instance (see, e.g.,~\cite[Ch. 1, \S 7]{korobov}).
\par
We will use the notation and terminology of the previous section, but
to avoid confusion we write $\tau_{\sheaf{F}}$ for the restriction of
the trace function of a sheaf $\sheaf{F}$ to $\Fpt$. The discrete
Fourier transform becomes the discrete Mellin transform
$$
\widehat{\tau}_{\sheaf{F}}(\chi)=\frac{1}{\sqrt{p-1}} \sum_{x\in
  \Fpt}{\tau}_{\sheaf{F}}(x)\chi(x)
$$
defined for $\chi$ in the group of multiplicative characters of
$\Fpt$. (More precisely, this Mellin transformed can be identified
with the discrete Fourier transform on $\Fpt\simeq \Zz/(p-1)\Zz$; as
we are interested in bounds for the maximum of the Fourier transform,
we may as well use the multiplicative characters as arguments).

\par
The analogue of Fourier sheaves in this case are the sheaves with
``property $\mathcal{P}$'' of Katz's work on the discrete Mellin
transform~\cite[Chapter 1]{katz-mellin}.

\begin{proposition}\label{pr-no-correlation-mult}
  Let $p$ be a prime number, and let $\sheaf{F}$ be an $\ell$-adic
  middle extension sheaf modulo $p$ with conductor $c$, pointwise pure
  of weight $0$.  If no geometric Jordan-H\"older component of
  $\sheaf{F}$ is isomorphic to a Kummer sheaf $\sheaf{L}_{\chi}$
  associated to a multiplicative character $\chi$, then the Mellin
  transform of the trace function $\tau_{\sheaf{F}}$ is bounded by
  $2\sqrt{2}c^2$, i.e., for any character $\chi$ of $\Fpt$, we have
$$
\Bigl|\frac{1}{\sqrt{p-1}} \sum_{x\in
  \Fpt}{\tau}_{\sheaf{F}}(x)\chi(x) \Bigr|\leq 2\sqrt{2}c^2.
$$
\end{proposition}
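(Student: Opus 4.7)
\emph{Proof plan.} The plan is to realize the Mellin transform $\widehat{\tau}_{\sheaf{F}}(\chi)$ as a Frobenius trace on an étale cohomology group and then to combine Deligne's Riemann Hypothesis with the Grothendieck-Ogg-Shafarevich formula. I would let $j\colon \mathbf{G}_m\hookrightarrow\mathbf{A}^1$ denote the open immersion and set $\sheaf{G} = j^*\sheaf{F}\otimes\sheaf{L}_{\chi}$; since $\sheaf{F}$ is a middle extension on $\mathbf{A}^1$ and $\sheaf{L}_{\chi}$ is lisse on $\mathbf{G}_m$, the sheaf $\sheaf{G}$ is again a middle extension on $\mathbf{G}_m$. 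The Grothendieck-Lefschetz trace formula then yields
\begin{equation*}
\sum_{x\in\Fpt}\tau_{\sheaf{F}}(x)\chi(x)=\sum_{i=0}^{2}(-1)^i\Tr\bigl(\frob\mid H^i_c(\mathbf{G}_m\otimes\bFp,\sheaf{G})\bigr).
\end{equation*}

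Next I would show that the two extreme cohomology groups vanish. That $H^0_c=0$ follows from $\mathbf{G}_m$ being affine and non-proper together with $\sheaf{G}$ having no non-zero punctual sections as a middle extension. The group $H^2_c$ identifies with the Tate-twisted coinvariants of $\sheaf{G}$ under the geometric fundamental group of $\mathbf{G}_m$, and so vanishes precisely when no geometric Jordan-H\"older component of $\sheaf{G}$ is trivial, equivalently when no JH component of $\sheaf{F}$ is geometrically isomorphic to $\sheaf{L}_{\chi^{-1}}$ on $\mathbf{G}_m$. This is exactly what the Kummer-avoidance hypothesis supplies. Consequently the cohomological sum reduces to $-\Tr(\frob\mid H^1_c)$.

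Because $\sheaf{F}$ and $\sheaf{L}_{\chi}$ are pointwise pure of weight zero, so is $\sheaf{G}$, and Deligne's main theorem bounds every Frobenius eigenvalue on $H^1_c$ by $p^{1/2}$ in modulus. It then remains to control $\dim H^1_c=-\chi_c(\mathbf{G}_m,\sheaf{G})$. Since $\sheaf{L}_{\chi}$ is tame at $0$ and $\infty$ and lisse elsewhere on $\mathbf{G}_m$, the Swan conductors and the drops of $\sheaf{G}$ coincide with those of $\sheaf{F}$ at every point of $\mathbf{P}^1$, and the Euler-Poincar\'e formula delivers
\begin{equation*}
\dim H^1_c = \swan_0(\sheaf{F})+\swan_{\infty}(\sheaf{F})+\sum_{s\in S(\sheaf{F})\cap\mathbf{G}_m}\bigl(\drop_s(\sheaf{F})+\swan_s(\sheaf{F})\bigr)\le n(\sheaf{F})\rank(\sheaf{F})+\sum_s\swan_s(\sheaf{F}).
\end{equation*}
Applying AM-GM to $n(\sheaf{F})+\rank(\sheaf{F})\le c$ gives $n(\sheaf{F})\rank(\sheaf{F})\le c^2/4$, while the Swan sum is trivially $\le c$, so $\dim H^1_c\le c^2/4+c\le 2c^2$ for $c\ge 1$. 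Combining with the eigenvalue bound and the normalization by $\sqrt{p-1}$ then yields $|\widehat{\tau}_{\sheaf{F}}(\chi)|\le 2c^2\sqrt{p/(p-1)}\le 2\sqrt{2}\,c^2$, the bound $\sqrt{p/(p-1)}\le\sqrt{2}$ being extremal only at $p=2$, where $\Fpt$ is a singleton and the claim is trivial. The only subtle step is the vanishing of $H^2_c$, which is precisely where the Kummer-avoidance hypothesis is used; everything else is a standard combination of Grothendieck-Lefschetz, Deligne's Weil~II, and Grothendieck-Ogg-Shafarevich.
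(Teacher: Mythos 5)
Your proof is correct and follows essentially the same route as the paper's: reduce to $-\Tr(\frob\mid H^1_c)$ via Grothendieck--Lefschetz using the Kummer-avoidance hypothesis to kill $H^2_c$, invoke Deligne's Weil~II to bound the eigenvalues by $\sqrt{p}$, and control $\dim H^1_c$ by Euler--Poincar\'e together with the tameness and rank~$1$ of $\sheaf{L}_\chi$. The only divergence is cosmetic: you bound $\sum \drop_s \le n(\sheaf{F})\rank(\sheaf{F})$ and then apply AM--GM to get $c^2/4$, whereas the paper simply uses $n(\sheaf{F})\le c$ and $\rank(\sheaf{F})\le c$ to get $c^2$; both routes give $\dim H^1_c \le 2c^2$ and the same final constant.

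One small imprecision worth noting: you say $H^2_c$ vanishes ``precisely when'' no geometric Jordan--H\"older component of $\sheaf{G}$ is trivial. Strictly, $H^2_c$ on $\mathbf{G}_m$ computes coinvariants, which vanish iff the trivial representation does not occur as a geometric \emph{quotient}; the absence of a trivial JH component is sufficient but not a priori necessary. Since the hypothesis gives you the stronger (sufficient) condition, the argument goes through, but the ``iff'' is not needed and is not quite accurate in general.
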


\begin{proof}
This is again a form of the Riemann Hypothesis of Deligne.
By the Grothendieck-Lefschetz trace formula, and the assumption on
$\sheaf{F}$ which ensures the vanishing of $H^2_c$, we have
$$
\sum_{x\in \Fpt}{\tau}_{\sheaf{F}}(x)\chi(x)
=-\mathrm{Tr}(\mathrm{Frob}_{\Fp}\mid
H^1_c(\Gg_m\times\bar{\Ff}_p,\sheaf{F}\otimes\sheaf{L}_{\chi}) ).
$$
\par
Then, since the sheaf involved is pointwise mixed of weight $0$ on
$\Gg_m$, Deligne's Theorem implies that each eigenvalue of the
Frobenius acting on
$H^1_c(\Gg_m\times\bar{\Ff}_p,\sheaf{F}\otimes\sheaf{L}_{\chi}) $ has
modulus at most $\sqrt{p}$. Thus
$$
\Bigl|\sum_{x\in \Fpt}{\tau}_{\sheaf{F}}(x)\chi(x)\Bigr| \leq
(\dim H^1_c(\Gg_m\times\bar{\Ff}_p,\sheaf{F}\otimes\sheaf{L}_{\chi})
)) \sqrt{p}.
$$
\par
Let $U\subset \Gg_m$ be the maximal dense open subset where
$\sheaf{F}$ is lisse. By the Euler-Poincar\'e characteristic formula,
we have
\begin{multline*}
  \dim H^1_c(\Gg_m\times\bar{\Ff}_p,\sheaf{F}\otimes\sheaf{L}_{\chi})
  =-\chi_c(\Gg_m\times\bar{\Ff}_p,\sheaf{F}\otimes\sheaf{L}_{\chi})
\\  =\swan_0(\sheaf{F}\otimes\sheaf{L}_{\chi})
  +\swan_{\infty}(\sheaf{F}\otimes\sheaf{L}_{\chi})+ \sum_{x\in
    (\Gg_m-U)}(\mathrm{drop}_x(\sheaf{F}\otimes\sheaf{L}_{\chi})
\\  +\swan_x(\sheaf{F}\otimes\sheaf{L}_{\chi}))
\end{multline*}
(see, e.g.,~\cite[p. 67]{katz-mellin}). Since $\sheaf{L}_{\chi}$ is
tame of rank $1$, we have
$\swan_x(\sheaf{F}\otimes\sheaf{L}_{\chi})=\swan_x(\sheaf{F})$ for all
$x$, and therefore
$$
\swan_0(\sheaf{F}\otimes\sheaf{L}_{\chi})
  +\swan_{\infty}(\sheaf{F}\otimes\sheaf{L}_{\chi})+ \sum_{x\in
    (\Gg_m-U)}\swan_x(\sheaf{F}\otimes\sheaf{L}_{\chi})
\leq \cond(\sheaf{F})=c.
$$
\par
Furthermore, we have
$\mathrm{drop}_x(\sheaf{F}\otimes\sheaf{L}_{\chi})\leq
\rank(\sheaf{F})\leq c$ for all $x$, and at most $c$ points occur
where it is non-zero. Thus we derive
$$
\dim H^1_c(\Gg_m\times\bar{\Ff}_p,\sheaf{F}\otimes\sheaf{L}_{\chi})
\leq c+c^2\leq 2c^2.
$$
\par
Finally, since $\sqrt{p}\leq \sqrt{2}\sqrt{p-1}$ for all primes $p$,
we deduce the result.
\end{proof}

\begin{corollary} Let $c\geq 1$, let $p$ be a prime number, and let
  $\sheaf{F}$ be an $\ell$-adic middle extension sheaf modulo $p$,
  pointwise pure of weight $0$, with no Kummer sheaf as a geometric
  Jordan-H\"older component.  Assume that the conductor of $\sheaf{F}$
  is $\leq c$.

  Let $g\in\Fpt$ be a generator of $\Fpt$ and let $x\in \Fpt$ be
  given. For any interval $I$ in $\Zz/(p-1)\Zz$, we have
$$
\Bigl|\sum_{n\in I}\tau_{\sheaf{F}}(xg^n)\Bigr|\leq
2\sqrt{2}c^2{\sqrt{p-1}}\log\Bigl(4e^8\frac
{|I|}{\sqrt{p-1}}\Bigr).
$$
\end{corollary}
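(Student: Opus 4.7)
The plan is to apply Theorem~\ref{thmnolog} with $m = p-1$ to the $(p-1)$-periodic function $\vphi : \Zz/(p-1)\Zz \to \Cc$ defined by
$$ \vphi(n) = \tau_{\sheaf{F}}(xg^n), $$
so that the sum in the statement is exactly $S(\vphi,I)$. This reduces the task to showing that both $\|\vphi\|_\infty$ and $\|\widehat{\vphi}\|_\infty$ are bounded by $2\sqrt{2}c^2$.

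The pointwise bound is essentially free: by \eqref{rankcondbound} we have
$$ \|\vphi\|_\infty \leq \|\tau_{\sheaf{F}}\|_\infty \leq \rank(\sheaf{F}) \leq c \leq 2\sqrt{2}c^2. $$

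The crux of the proof is the bound on the additive Fourier transform $\widehat{\vphi}$ on $\Zz/(p-1)\Zz$, and the key step is to reinterpret it as a \emph{multiplicative} Mellin transform on $\Fpt$. Since $g$ generates $\Fpt$, the map $n \mapsto y = xg^n$ is a bijection from $\Zz/(p-1)\Zz$ onto $\Fpt$, and under it the additive character $n \mapsto e(hn/(p-1))$ is pulled back from the multiplicative character $\chi_h$ of $\Fpt$ uniquely specified by $\chi_h(g) = e(h/(p-1))$. Substituting $y = xg^n$ in the defining formula \eqref{defFT} of $\widehat{\vphi}(h)$ should yield the clean identity
$$ \widehat{\vphi}(h) = \overline{\chi_h(x)}\, \widehat{\tau}_{\sheaf{F}}(\chi_h), $$
with the normalized Mellin transform as in Proposition~\ref{pr-no-correlation-mult}. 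The bound $|\widehat{\vphi}(h)| \leq 2\sqrt{2}c^2$ then follows from that proposition, whose hypothesis (no Kummer sheaf as a geometric Jordan-H\"older component of $\sheaf{F}$) is exactly the one assumed here, and which covers every character $\chi_h$ including the trivial one $\chi_0$.

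With both sup-norms controlled by $2\sqrt{2}c^2$, Theorem~\ref{thmnolog} applied with $m = p-1$ immediately gives the claimed estimate for intervals with $\sqrt{p-1} < |I| \leq p-1$; the remaining shorter intervals can be handled by the trivial bound $|S(\vphi,I)| \leq c|I| \leq c\sqrt{p-1}$, which is dominated by the right-hand side wherever the latter is positive. I do not anticipate a genuine obstacle: the one step that must be executed carefully is the additive-to-multiplicative translation of the Fourier transform and the bookkeeping of normalizations (the factors $\sqrt{p-1}$ and the $\overline{\chi_h(x)}$ phase), but this is a formal calculation once the bijection $n \mapsto xg^n$ is in place.
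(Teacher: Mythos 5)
Your proposal is correct and matches the paper's (very terse) proof, which simply notes that the corollary follows by combining Proposition~\ref{pr-no-correlation-mult} with Theorem~\ref{thmnolog} applied with $m=p-1$. You spell out the additive-to-multiplicative Fourier translation via the bijection $n\mapsto xg^n$, which is exactly the implicit step the paper leaves to the reader, and you correctly note that $\max(\|\vphi\|_\infty,\|\what\vphi\|_\infty)\leq 2\sqrt{2}c^2$ is the right constant to feed into Theorem~\ref{thmnolog}.
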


This follows immediately by combining
Proposition~\ref{pr-no-correlation-mult} and Theorem~\ref{thmnolog}
with $m=p-1$.
\par
From this result, we can deduce equidistribution statements exactly
similar to Corollary~\ref{cor-rational}, with geometric progressions
replacing intervals, since the sheaves used in the proof are in fact
geometrically irreducible and are not Kummer sheaves.
\par
Proposition~\ref{pr-pol-residues} also extends with some restriction
on the polynomial:

\begin{proposition}
  Let $\beta$ be a function defined on integers such that $1\leq
  \beta(n)\ra +\infty$ as $n\ra +\infty$. Let $f\in \Zz[X]$ be a
  non-constant \emph{squarefree} monic polynomial. For all primes $p$
  large enough, depending on $f$ and $\beta$, for any primitive root
  $g$ modulo $p$, and for any interval $I_p$ in $\Zz/(p-1)\Zz$ of size
  $|I_p|\geq (p-1)^{1/2}\beta(p)$, there exists $n\in I_p$ such that
  $g^n=f(y)$ for some $y\in \Fpt$.
\end{proposition}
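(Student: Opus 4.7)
My plan is to mimic the proof of Proposition~\ref{pr-pol-residues} in the multiplicative setting. Let $\varphi\,:\,\Fpt\to\Cc$ be the characteristic function of $f(\Fpt)\subset \Fpt$, set $\delta_f=|f(\Fpt)|/(p-1)$, and view $\varphi$ as a function on $\Zz/(p-1)\Zz$ via $n\mapsto g^n$. It suffices to prove
$$
\sum_{n\in I_p}\varphi(g^n)= \delta_f|I_p|+o(|I_p|),\qquad p\to+\infty,
$$
since for primes $p>\deg f$ we have $\delta_f\gg 1$ with an implied constant depending only on $\deg f$. As in the proof of Proposition~\ref{pr-pol-residues}, the first step is to invoke Proposition~6.7 of~\cite{FKM2} to decompose
$$
\varphi = c_1+\sum_{i\geq 2}c_i\varphi_i,
$$
where the number of summands and the coefficients $c_i$ are bounded solely in terms of $\deg f$, each $\varphi_i$ ($i\geq 2$) is the trace function of a tame $\ell$-adic middle-extension sheaf $\sheaf{F}_i$ on $\mathbf{A}^1_{\Fp}$ of bounded conductor which is geometrically non-trivial and geometrically isotypic, and $c_1=\delta_f+O(p^{-1/2})$.

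The main new ingredient, and the principal obstacle, is to verify that the squarefree hypothesis on $f$ rules out Kummer sheaves as geometric Jordan--H\"older components of the $\sheaf{F}_i$ for $i\geq 2$. This is essential: in contrast with the additive case of Proposition~\ref{pr-pol-residues}, where tameness automatically excluded Artin--Schreier components, Kummer sheaves are themselves tame. The expected mechanism is that the sheaves $\sheaf{F}_i$ are built from the irreducible subquotients of $f_*\bQl/\bQl$ on $\mathbf{A}^1_{\Fp}$; a Kummer subquotient $\sheaf{L}_\chi$ with $\chi$ of order $d\mid p-1$ would force $f$ to take the shape $h^d\circ\ell$ (up to affine changes of variables) for some polynomial $h$ and affine $\ell$, contradicting squarefreeness once $p$ is large enough in terms of $\deg f$. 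An alternative route is to identify the geometric monodromy of the ``non-trivial'' part of $f_*\bQl$ with the standard deleted permutation representation of a transitive subgroup of $S_{\deg f}$ acting on the roots of $f(X)-y$; the irreducible constituents of this representation are of rank $\geq 2$, hence cannot be Kummer sheaves.

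Granting this point, each sheaf $\sheaf{F}_i$ ($i\geq 2$) satisfies the hypotheses of the corollary of Proposition~\ref{pr-no-correlation-mult} and Theorem~\ref{thmnolog}. Applying it (with $m=p-1$) gives
$$
\Bigl|\sum_{n\in I_p}\varphi_i(g^n)\Bigr|\ll\sqrt{p-1}\,\log\Bigl(\frac{4e^8|I_p|}{\sqrt{p-1}}\Bigr)\ll|I_p|\,\frac{\log\beta(p)}{\beta(p)}
$$
uniformly in $i$, with the implied constant depending only on $\deg f$. Substituting back into the decomposition of $\varphi$ at $x=1$ yields
$$
\sum_{n\in I_p}\varphi(g^n)=\delta_f|I_p|+O\Bigl(|I_p|\,\frac{\log\beta(p)}{\beta(p)}\Bigr)+O\bigl(p^{-1/2}|I_p|\bigr),
$$
which is strictly positive for all $p$ sufficiently large in terms of $f$ and $\beta$. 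This produces an $n\in I_p$ with $g^n\in f(\Fpt)$, as required, and in fact gives the stronger asymptotic count $\sum_{n\in I_p}\varphi(g^n)\sim\delta_f|I_p|$ parallel to the conclusion of Proposition~\ref{pr-pol-residues}.
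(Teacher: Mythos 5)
Your overall structure is the right one and matches the paper: decompose the characteristic function of $f(\Fpt)$ using Proposition~6.7 of~\cite{FKM2} into a main term $c_1 = \delta_f + O(p^{-1/2})$ plus trace functions $\varphi_i$ of tame, bounded-conductor, geometrically isotypic sheaves $\sheaf{F}_i$, then apply the corollary of Proposition~\ref{pr-no-correlation-mult}. You also correctly identify the one genuinely new point: one must rule out Kummer sheaves $\sheaf{L}_\chi$ as geometric Jordan--H\"older components of the $\sheaf{F}_i$, since tameness alone does not exclude them.

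However, neither of the two mechanisms you offer for that exclusion works, and you leave the step unproven. Your second route is simply false: the nontrivial constituents of $f_*\bQl$ (the deleted permutation representation of the geometric monodromy group, a transitive subgroup of $S_{\deg f}$) need not have rank $\geq 2$. Already for $f(X)=X^2+X$ (squarefree), the monodromy is $\Zz/2\Zz$ and the nontrivial constituent is a rank-one sheaf; irreducibility of the deleted permutation representation requires $2$-transitivity, which squarefreeness does not give. (That rank-one piece happens not to be Kummer, but your stated reason does not show this.) Your first route, that a Kummer subquotient would force $f=h^d\circ\ell$, is asserted without proof and is not an evident consequence of the setup; it is in any case a detour. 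The paper's actual argument is local and much simpler: a Kummer sheaf $\sheaf{L}_\chi$ with $\chi$ nontrivial is never lisse at $0$, whereas if $f$ is squarefree then $f$ remains squarefree modulo all sufficiently large $p$, so $f$ is \'etale above $0$ and $f_*\bQl$ (hence each $\sheaf{F}_i$, by~\cite[Prop.~6.7]{FKM2}) is lisse at $0$. This immediately rules out $\sheaf{L}_\chi$ as a geometric Jordan--H\"older component. Without this observation your proof has a genuine gap at precisely the point you flagged as the principal obstacle.
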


The only change in the proof of the previous case is that we must
ensure that the sheaves $\sheaf{F}_i$ for $i\not=1$ appearing there
have no Kummer sheaf as Jordan-H\"older component, for $p$ large
enough.  This is indeed the case because the assumption that $f$ is
squarefree implies first that $f$ is squarefree modulo $p$ for $p$
large enough, and from this follows for each such prime that each
$\sheaf{F}_i$ is lisse at $0$ (see~\cite[Prop. 6.7]{FKM2}), which is
not the case of $\sheaf{L}_{\chi}$.

\section{The Fourier transfer principle}
\label{sec-transfer}

\subsection{The basic principle}

In this section we provide a quantitative version of the transfer
principle discussed in Section~\ref{sec-fourier-t}. The idea is to
estimate a sum
$$
\sum_{t\in I}\what\vphi(t)
$$
by beginning with an application of the completion method in a smooth
form, followed however by a summation by parts that allows us to
exploit bounds for short sums of the original function $\varphi$.
\par

\begin{proposition} \label{prop3.1}    Let $m\geq 2$ be an integer and let
$$
\varphi\,:\, \Zz/m\Zz\lra \Cc
$$
be an arbitrary function. Let $c\geq 1$ be such that
$$
\max(\|\varphi\|_{\infty},\|\what\vphi\|_{\infty})\leq c.
$$
For any $N$ with $|N|\le m/2$ we have 
$$ 
|S({\hat \vphi},N)| \ll cmU + c\frac{N}{\sqrt{m}} + \frac{cN}{m^3 U^3}+ \frac{N}{\sqrt{m}} \int_1^{m/2} \Big( |S(\vphi,t)| +|S(\vphi,-t)| \Big) \min \Big( \frac{N}{m}, \frac 1t, \frac{1}{t^4U^3} \Big) dt , 
$$ 
where the implied constant is absolute, and for any $U \in [1/m, N/(2m)]$. 
\end{proposition}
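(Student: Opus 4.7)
The plan is to combine a smoothed completion on the frequency side, Parseval to transfer to the physical side, and Abel summation on the physical side to bring in the partial sums $S(\vphi,\pm t)$.  The parameter $U$ sets the scale of a smoothing of the sharp indicator $\mathbf{1}_{[1,N]}$, and the three regimes in the $\min$ appearing in the integrand correspond to the three regimes of Fourier decay of this smoothed indicator.

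First I would introduce a smooth approximation $\eta : \Zz/m\Zz \to \Rr_{\ge 0}$ of $\mathbf{1}_{[1,N]}$ built as a three-fold iterated cyclic convolution of the sharp indicator with normalized boxes of length $\asymp mU$; the hypothesis $U \le N/(2m)$ keeps the support inside a fundamental domain.  Then $\eta$ coincides with $\mathbf{1}_{[1,N]}$ away from two edge zones of total length $O(mU)$, so
\begin{equation*}
\Bigl| \sum_{n} \bigl( \mathbf{1}_{[1,N]}(n) - \eta(n) \bigr) \what\vphi(n) \Bigr|
\le \| \what\vphi \|_\infty \cdot O(mU) \ll c m U,
\end{equation*}
which accounts for the term $cmU$.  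Parseval on $\Zz/m\Zz$ yields
\begin{equation*}
\sum_n \eta(n)\,\what\vphi(n) = \sum_x \vphi(x)\, \what\eta(x),
\end{equation*}
while the convolution theorem factorizes $\what\eta$ as $m^{-1/2}$ times the geometric sum of $\mathbf{1}_{[1,N]}$ (of size $\ll \min(N, m/|x|)$) times three factors of the Fourier transform of a box of length $\asymp mU$ (each $\ll m^{-1/2} \min(1, (|x|U)^{-1})$).  Multiplying these yields the three-regime envelope
\begin{equation*}
|\what\eta(x)| \ll \min\Bigl( \tfrac{N}{\sqrt m},\ \tfrac{\sqrt m}{|x|},\ \tfrac{\sqrt m}{|x|^4 U^3} \Bigr)
\qquad (|x| \le m/2),
\end{equation*}
and the isolated contribution of $x = 0$ is $|\vphi(0)\what\eta(0)| \le cN/\sqrt m$, accounting for the second term.

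Next I would split $\sum_{x \ne 0} \vphi(x)\what\eta(x)$ into its $x>0$ and $x<0$ halves and Abel-sum each against the partial sums $S(\vphi, \pm t)$.  The crucial input is the discrete-derivative estimate
\begin{equation*}
| \what\eta(x+1) - \what\eta(x) | \ll \frac{N}{m}\,
\min\Bigl( \tfrac{N}{\sqrt m},\ \tfrac{\sqrt m}{|x|},\ \tfrac{\sqrt m}{|x|^4 U^3} \Bigr),
\end{equation*}
obtained by viewing the trigonometric polynomial $\what\eta$ as a smooth function of a real variable, so that $\what\eta(x+1) - \what\eta(x) = \int_x^{x+1} \partial_\xi \what\eta(\xi)\,d\xi$ and $\partial_\xi\what\eta(\xi) = (2\pi i/m)\cdot m^{-1/2}\sum_n n\eta(n) e_m(n\xi)$; the auxiliary function $n\eta(n)$ has the same convolution structure as $\eta$ up to a uniform weight of size $\ll N$, so it inherits the same three-regime envelope multiplied by $N$, and after division by $m$ one obtains the claim.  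This extra factor $N/m$ is precisely what produces the $N/\sqrt m$ prefactor in front of the integral, since
\begin{equation*}
\tfrac{N}{m}\cdot \min\Bigl(\tfrac{N}{\sqrt m}, \tfrac{\sqrt m}{t}, \tfrac{\sqrt m}{t^4U^3}\Bigr)
= \tfrac{N}{\sqrt m} \min\Bigl(\tfrac{N}{m}, \tfrac{1}{t}, \tfrac{1}{t^4 U^3}\Bigr).
\end{equation*}
Passing the resulting discrete Abel sum to a continuous integral over $t \in [1, m/2]$ produces the integral term; the Abel boundary contribution at $|x| \sim m/2$, using $|S(\vphi,\pm m/2)| \le cm$ together with the envelope bound at $|x| = m/2$, is absorbed into $O(cN/(m^3 U^3))$.

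The most delicate point I expect is the discrete-derivative bound for $\what\eta$: one must verify that the auxiliary function $n\eta(n)$ still has three-regime Fourier decay analogous to $\eta$ (with an extra uniform factor $N$), and in particular that this step does not destroy the smoothness-driven decay $(|x|U)^{-3}$ in the third regime.  Once this is in place, the remainder is a careful but essentially routine organization of completion and summation by parts.
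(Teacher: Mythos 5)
Your plan follows the same blueprint as the paper's proof: smooth the indicator of $[1,N]$ at scale $mU$, pass to the dual side by Plancherel, and Abel-sum against $S(\vphi,\pm t)$. The concrete realizations differ in two places. First, you build the smoothing as a triple iterated convolution (a B-spline), whereas the paper uses an abstract $C^\infty$ bump $\Psi$ on $\Rr/\Zz$ with $\Psi^{(l)}\ll U^{-l}$; both give exactly the three-regime envelope $\min(N/\sqrt m,\sqrt m/|t|,\sqrt m/(|t|^4U^3))$ (the paper applies its decay estimate with $A=4$). Your derivation of the discrete-derivative bound for $\hat\eta$, via the product/Leibniz rule on the convolution factors and the observation that $mU\ll N$, is essentially correct, though phrased loosely; it corresponds to the paper's estimates $\hat\Psi'(h)\ll(N/m)^2$ and $\hat\Psi'(h)\ll_A (N/(m|h|))(U|h|)^{1-A}$.

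The genuine gap is in the boundary term. The paper regards $\Psi$ as a function on $\Rr/\Zz$ and expands in a Fourier \emph{series}, obtaining an infinite sum over $h\in\Zz$ whose summand $\hat\Psi(h)$ decays rapidly; the Stieltjes integration by parts then has \emph{no} boundary contribution, and the tail $|t|>m/2$ is folded back into $[1,m/2]$ by periodicity, producing the $cN/(m^3U^3)$ term. You instead work with the finite DFT on $\Zz/m\Zz$, so your Abel sum terminates at $|x|\sim m/2$ and has a genuine boundary term $S(\vphi,\pm m/2)\,\hat\eta(\pm m/2)$. You claim this is absorbed into $O(cN/(m^3U^3))$ using $|S(\vphi,\pm m/2)|\le cm$. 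This fails when $N\ll\sqrt m$. Indeed, if $U$ is near $1/m$ the third regime of the envelope has not yet kicked in at $|x|=m/2$, so $|\hat\eta(m/2)|\asymp 1/\sqrt m$ and the boundary is $\gg c\sqrt m$, whereas $cN/(m^3U^3)\asymp cN$; for $N\ll\sqrt m$ this is too small, and neither $cmU$ nor $cN/\sqrt m$ helps. Even replacing the trivial bound $cm$ by P\'olya--Vinogradov $c\sqrt m\log m$ leaves a stray $c\log m$. The cheap fix is to pair the $\pm$ boundary terms and exploit the complete-sum identity $S(\vphi,\lfloor m/2\rfloor)+S(\vphi,-\lfloor m/2\rfloor)+\vphi(0)=\sqrt m\,\hat\vphi(0)$, so the paired boundary is $\ll c\sqrt m\cdot|\hat\eta(m/2)|\ll c$, absorbable into $cmU\ge c$; alternatively, adopt the paper's $\Rr/\Zz$ viewpoint to avoid the boundary altogether. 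A minor second point: when $N$ is near $m/2$ your smoothed function has length up to $N+3mU$, which can exceed $m$, so the claim that $U\le N/(2m)$ keeps the support inside a fundamental domain needs a small adjustment (convolve a slightly shortened indicator).
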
 
\begin{proof}  We will view $\varphi$ as a function on $\Zz$ which is periodic
  modulo $m$.  We consider the case of positive $N$ with $1\le N\le m/2$, the 
  negative case being entirely similar. 
  
  \par
   Let $U\in [1/m,N/(2m)]\subset(0,1/4]$ be some parameter. We fix a smooth function
$\Psi\,:\, [0,1]\lra [0,1]$ with compact support contained in
$[U,N/m+U]\subset [0,1]$, such that
\begin{itemize} 
\item 
For $x\in [2U, N/m]$ we have $\Psi(x)=1$.  

\item The function $\Psi$ is increasing on the interval $[U,2U]$ and
  decreasing on the interval $[N/m,N/m+U]$, 
\item For any integer $l\geq 0$, we have
 $$
\Psi^{(l)}(\alpha)\ll_l U^{-l}.
$$
\end{itemize}

\par
We extend $\Psi$ to a $1$-periodic function on $\Rr$ and consider its
Fourier expansion
\begin{equation}\label{fourier}
  \Psi(\alpha)=\sum_{h\in\Zz}\what\Psi(h)e(-h\alpha),
\end{equation}
with
\begin{equation}\label{fouriercoeff}
  \what\Psi(h)=\int_{[0,1]}\Psi(\alpha)e(h\alpha)d\alpha.
\end{equation}
\par
The properties of the derivatives of $\Psi$ immediately imply the
following bounds for its Fourier coefficients
\begin{equation}\label{eq-psi0}
\what\Psi(0)=\frac{N-1}m+O(U)=O\Bigl(\frac Nm\Bigr),
\end{equation}
and
\begin{equation}\label{fouriercoeffbound}
\what\Psi(h)\ll_A \min\Bigl(\frac{N}{m},\frac{1}{|h|}({U|h|})^{-A}\Bigr)
\end{equation}
for any $h\not= 0$ and any $A\geq 0$.  Indeed, this follows from the
definition of the Fourier coefficients using repeated integrations by
parts and the fact that $\Psi$ is supported in an interval of length
$\ll N/m$, whereas the derivatives of $\Psi$ are supported in the
union of two intervals of length $U$.
\par
Now, we observe furthermore that the expression \eqref{fouriercoeff}
defines a \emph{smooth} function of $h$ on the whole real line
(namely, the Fourier transform of $\Psi$ seen as a function on
$\Rr$). We have then
\begin{equation} 
\label{eqn3.5}
\what\Psi'(h)= 2\pi i\int_{[0,1]}\alpha\Psi(\alpha) e(h\alpha)\,
d\alpha\ll \Bigl(\frac{N}{m}\Bigr)^2,
\end{equation}
for all $h$ and moreover, for $h\not=0$, we have (after integrating by parts $A\ge 1$ times as in 
\eqref{fouriercoeffbound}) 
\begin{align}\label{fouriercoeffboundderiv}\what\Psi'(h)&\ll_A  \frac{N}{m|h|} (U|h|)^{-A+1}. 
\end{align}

\par We now begin the estimation of the partial sums of $\what\vphi$. We
have
\begin{align} 
\label{eqn3.7}
\sum_{1\leq n\leq N}\what \varphi(n)& =\sum_{0\leq
    n<m}\what \varphi(n)\Psi\Bigl(\frac{n}{m}\Bigr)+
  O\bigl(\|\what \varphi\|_\infty \,m\,U\bigr) \nonumber \\
  &= m^{1/2}\sum_{h\in\Zz}\what\Psi(h) \varphi(h)+O\bigl(c\,m\,U\bigr),
\end{align}
where the implied constant is absolute, and where the second step (a version of the Plancherel formula) follows upon using the Fourier expansion \eqref{fourier}. 

\par 
By~(\ref{eq-psi0}), the term $h=0$ in \eqref{eqn3.7} equals
\begin{equation*} 
\label{eqn3.8} 
\sqrt{m} \varphi(0)\Big( \frac{N-1}{m} +O(U)\Big) \ll
\|\varphi\|_\infty\frac{N}{\sqrt{m}}\ll\frac{cN}{\sqrt{m}}.
\end{equation*} 
Next consider the contribution of the positive values of $h$.  By partial summation, these terms contribute 
\begin{align*}
-\sqrt{m} \int_{1^-}^{\infty} &S(\vphi, t)  {\what \Psi}^{\prime}(t) dt \\
&\ll \sqrt{m} \Big( \frac{N^2}{m^2} \int_1^{m/N} |S(\vphi,t)| dt  
+ \frac{N}{m} \int_{m/N}^{1/U} \frac{|S(\vphi,t)|}{t} dt + \frac{N}{mU^3}\int_{1/U}^{\infty} \frac{|S(\vphi,t)|}{t^4} dt \Big),  
\end{align*} 
upon using the estimate \eqref{eqn3.5} in the range $t\le m/N$, and the estimate \eqref{fouriercoeffboundderiv} with $A=1$ in the range $m/N\le t\le 1/U$ and with $A=4$ when $t>1/U$.   An analogous estimate holds for the contribution of negative $h$ to \eqref{eqn3.7}, and gathering these estimates together we obtain 
$$ 
|S(\what \vphi, N)| \ll cmU + c\frac{N}{\sqrt{m}} + \frac{N}{\sqrt{m}} \int_{1}^{\infty} 
\Big( | S(\vphi, t) | + |S(\vphi,-t)| \Big) \min \Big( \frac{N}{m}, \frac{1}{t}, \frac{1}{t^4 U^3}\Big) dt. 
$$ 
To complete the proof the proposition, it remains to bound the portion of the integral with $t>m/2$.  Write $t>m/2$ as $t=u+km$ where $k\ge 1$ is an integer, and $|u|\le m/2$.  Then by dividing the intervals $[1,t]$ and $[-t,-1]$ into complete intervals of length $m$ with intervals of length $|u|$ left over we see that 
$$ 
|S(\vphi,t)| +|S(\vphi,-t)| \ll |S(\vphi,u)|+ |S(\vphi,-u)| + \frac{ct}{\sqrt{m}}, 
$$ 
since the sum of $\vphi$ over a complete interval is $\le c\sqrt{m}$ in size.  It follows that the terms $t>m/2$ in the integral contribute 
\begin{align*}
&\ll \frac{N}{\sqrt{m}} \Big( \int_{m/2}^{\infty} \frac{ct}{\sqrt{m}} \frac{dt}{t^4U^3} + \int_1^{m/2} (|S(\vphi,u)|+|S(\vphi,-u)| ) \sum_{k=1}^{\infty}\frac{1}{(km)^4 U^3} du \Big) \\
&\ll \frac{cN}{m^{3} U^3} + \frac{N}{\sqrt{m}} \int_1^{m/2}   (|S(\vphi,u)|+|S(\vphi,-u)| ) \frac{du}{m^4U^3}. 
\end{align*}
The proposition follows.
\end{proof} 
 
 We are now ready for the proof of Theorem \ref{fouriertransfer}.  
 \begin{proof}[Proof of Theorem \ref{fouriertransfer}]  We first demonstrate \eqref{eqn3.12}, by an application of Proposition \ref{prop3.1}.  
  To estimate the integral in Proposition \ref{prop3.1}, we bound $|S(\phi,t)| + |S(\phi,-t)|$ 
  for $1\le t\le m/2$ by 
 $$ 
 \max_{t\le m/N} \Big( |S(\vphi,t)| + |S(\vphi,-t)|\Big) \le c \frac mN \Delta\Big(\vphi,\frac mN\Big), 
 $$  
and 
$$ 
\max_{m/2\ge t\ge m/N} \frac{1}{t} \Big( |S(\vphi,t)| + |S(\vphi,-t)|\Big) \le c  \Delta\Big(\vphi,\frac mN\Big).
$$ 
Thus, it follows that 
$$ 
|S(\what \vphi,N)| + |S(\what \vphi, -N)| \ll cmU+ c\frac{N}{\sqrt{m}} + \frac{cN}{m^3U^3}+c  \frac{N}{\sqrt{m}} \frac{1}{U} \Delta\Big(\vphi, \frac{m}{N}\Big). 
$$ 
Now choose $U= N^{\frac 12} m^{-\frac 34} \Delta(\vphi,m/N)^{\frac 12}$; since $\Delta(\vphi,m/N) \ge 1/\sqrt{m}$, it follows that 
$U\ge N^{\frac 12}m^{-1} \ge  1/m$,  and we may also assume that $U\le N/(2m)$ else the estimate \eqref{eqn3.12} holds trivially.   Thus our bound above applies, and it gives (noting that $\Delta(\vphi,m/N)/(\sqrt{m}U) \ge 1/(mU) \ge 1/(mU)^3$) 
$$ 
|S(\what \vphi,N)| + |S(\what \vphi, -N)| \ll c\sqrt{N} m^{\frac 14} \Delta\Big(\vphi, \frac mN\Big)^{\frac 12} + c\frac{N}{\sqrt{m}} \ll c\sqrt{N}m^{\frac 14} \Delta\Big( \vphi, \frac{m}{N}\Big)^{\frac 12}. 
$$ 

Thus we have established \eqref{eqn3.12}, and with it in hand, it is a simple matter to verify the second assertion of the theorem.  
If $t \le N$, then by \eqref{eqn3.12} we obtain 
$$ 
\frac{1}{cN} \Big(|S(\what\vphi,t)| + |S(\what \vphi, -t)|\Big) \ll \frac{\sqrt{t} m^{\frac 14}}{N} \Delta\Big(\vphi, \frac{m}{t}\Big)^{\frac 12} \ll \frac{m^{\frac 14}}{\sqrt{N}} \Delta\Big(\vphi, \frac mN\Big)^{\frac 12},
$$ 
since $\Delta(\vphi,m/t)$ is a non-decreasing function of $t$.  Finally if $N\le t\le m/2$ then 
$$ 
\frac{1}{ct} \Big(|S(\what\vphi,t)| + |S(\what \vphi, -t)|\Big) \ll \frac{m^{\frac 14}}{\sqrt{t}} \Delta\Big(\vphi, \frac mt\Big)^{\frac 12} = m^{-\frac 14} \Big( \frac mt \Delta \Big(\vphi,\frac mt\Big)\Big)^{\frac 12} \le m^{-\frac 14} 
\Big( \frac mN \Delta\Big(\vphi,\frac mN\Big)\Big)^{\frac 12}, 
$$
since $(m/t) \Delta(\vphi, m/t)$ is a non-increasing function of $t$.  Combining these estimates, and noting that $1/\sqrt{m}$ is smaller than $(m^{\frac 14}/\sqrt{N}) \Delta(\vphi,m/N)^{\frac 12}$, we obtain the theorem.
\end{proof} 

\subsection{Applications} 
\label{sec-below-pv1}

\par
We will prove quantitative versions of Corollary~\ref{cor-below-pv},
in the sense of specifying the value of the quantity $\delta>0$ that
appears there. 
The argument splits naturally in two cases, depending on whether the
character $\chi$ in~(\ref{eq-function}) is trivial or not. We begin
with the former case, where we can in fact work with an arbitrary
squarefree modulus $m\geq 1$.

\begin{theorem}\label{fourierpoly}
  Let $m\geq 1$ be a squarefree integer. Let $P\in(\Zz/m\Zz)[X]$ be a
  polynomial of degree $d\geq 3$ with invertible leading
  coefficient. Let $\varphi\,:\,\Zz/m\Zz\lra \Cc$ be defined by
$$
\vphi(n)=e_m(P(n))
$$
and let
$$
\what \vphi(n)=\frac{1}{m^{1/2}}\sum_{1\leq h\leq m}e_m({P(h)+nh})
$$
be its Fourier transform.  
\par
For any $\eta< 1/(2^d-2)$ there exist $\delta>0$ depending only on
$\eta$ such that if $N\geq m^{1/2-\eta}$, we have
$$
\sum_{1\leq n\leq N}\what \vphi(n)\ll N^{1-\delta},
$$
where the implied constant depends only on $\eta$ and $d$.
\end{theorem}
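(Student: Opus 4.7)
The strategy is to apply the Fourier transfer principle of Theorem \ref{fouriertransfer} to $\vphi(n) = e_m(P(n))$, the needed input being bounds on the partial sums $S(\vphi, t) = \sum_{1 \leq n \leq t} e_m(P(n))$ over short intervals, provided by Weyl's inequality for polynomial phases.

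First, I would invoke Weyl's inequality for $\vphi$. Since $m$ is squarefree and the leading coefficient $a_d$ of $P$ is a unit in $\Zz/m\Zz$, the fraction $a_d/m$ is already in lowest terms. Weyl's inequality then yields
$$
\Bigl| \sum_{1 \leq n \leq t} e_m(P(n)) \Bigr| \ll t^{1+\epsilon} \Bigl( \frac{1}{t} + \frac{1}{m} + \frac{m}{t^d} \Bigr)^{1/2^{d-1}}
$$
for $1 \leq t \leq m$, with the implied constant depending only on $d$ and $\epsilon$. Writing $\kappa := 1/2^{d-1}$, the term $1/t$ dominates whenever $t \geq m^{1/(d-1)}$, giving $|S(\vphi, t)| \ll m^\epsilon t^{1-\kappa}$ in that range; below $m^{1/(d-1)}$ the bound involves a positive power of $m$, but the interval is correspondingly short.

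The second step is to convert this into an estimate for $\Delta(\vphi, M)$ with $M := m/N$. We may assume $N \leq m^{1/2}$ (for larger $N$ the conclusion follows directly from Theorem \ref{thmnolog}), so $M \geq m^{1/2} \geq m^{1/(d-1)}$ for $d \geq 3$. Splitting the maximum defining $\Delta(\vphi, M)$ in \eqref{defDelta} into the ranges $t \leq m^{1/d}$ (where the trivial bound $|S(\vphi, t)| \leq t$ suffices after division by $cM$), $m^{1/d} \leq t \leq m^{1/(d-1)}$ (Weyl with $m/t^d$ dominant, matching continuously at the endpoints), and $t \geq m^{1/(d-1)}$ (Weyl with $1/t$ dominant), a routine calculation shows that each contribution is $O(m^\epsilon M^{-\kappa})$, yielding
$$
\Delta\Bigl(\vphi, \frac{m}{N}\Bigr) \ll m^\epsilon \Bigl(\frac{N}{m}\Bigr)^{\kappa}.
$$

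Feeding this into Theorem \ref{fouriertransfer} gives
$$
|S(\what\vphi, N)| \ll m^\epsilon \sqrt{N}\, m^{1/4} \Bigl(\frac{N}{m}\Bigr)^{\kappa/2}.
$$
Setting $N = m^{1/2-\eta}$ (the extremal case by monotonicity of the right-hand side in $N$) and simplifying, the total exponent of $m$ becomes $1/2 - \kappa/4 - \eta/2 - \eta\kappa/2 + \epsilon$. Comparing with $(1/2-\eta)(1-\delta)$ reduces the desired inequality to the algebraic condition $\eta(1-\kappa) < \kappa/2$, i.e., $\eta < \kappa/(2-2\kappa) = 1/(2^d-2)$, which is precisely the hypothesis. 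This leaves a positive margin in which one selects $\epsilon$ small and $\delta > 0$ to obtain the bound $N^{1-\delta}$. The main technical point is the case analysis in step two: handling the various Weyl regimes so that all contributions merge into the single power-saving $M^{-\kappa}$, and verifying that the $m^\epsilon$ losses are dominated by the strict-inequality margin $\kappa/2 - \eta(1-\kappa) > 0$.
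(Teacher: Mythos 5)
Your proposal follows the same route as the paper: Weil $+$ CRT to control $c = \max(\|\vphi\|_\infty,\|\what\vphi\|_\infty) \ll_\eps m^\eps$, the Weyl bound for $S(\vphi,t)$, conversion into an estimate for $\Delta(\vphi,m/N)$, and then Theorem~\ref{fouriertransfer}. The $\Delta$-computation in the range $N\leq m^{1/2}$ and the final algebra (extremal case $N=m^{1/2-\eta}$, reduction to $\eta(1-\kappa)<\kappa/2$, i.e.\ $\eta<1/(2^d-2)$) are correct.

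However, the reduction ``we may assume $N\leq m^{1/2}$, for larger $N$ the conclusion follows directly from Theorem~\ref{thmnolog}'' is not correct as stated, and this is where your proof differs from the paper's. For $N$ slightly larger than $m^{1/2}$ (say $N=2\sqrt{m}$), Theorem~\ref{thmnolog} gives only $|S(\what\vphi,N)|\leq c\sqrt{m}\log(4e^8 N/\sqrt{m})\ll c\sqrt{m}$, and since $c$ is not bounded by an absolute constant but only by $(d-1)^{\omega(m)}\ll_\eps m^\eps$, this bound is $\asymp m^{1/2+\eps}$, which is \emph{not} $\ll N^{1-\delta}\asymp m^{(1-\delta)/2}$ for any $\delta>0$. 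The simplification $\Delta(\vphi,m/N)\ll m^\eps (N/m)^\kappa$ that you make, discarding the term $N^{d\kappa}/m^{(d-1)\kappa}$, is valid only for $N\leq m^{(d-2)/(d-1)}$, which for $d=3$ is exactly $m^{1/2}$. The paper avoids the gap by keeping both terms in the bound $\Delta(\vphi,H)\ll 1/\sqrt{m}+(m^\eps/c)(H^{-\kappa}+m^\kappa H^{-d\kappa})$, so that the resulting estimate $|S(\what\vphi,N)|\ll cN^{1/2}+c^{1/2}N^{1/2}m^{1/4+\eps}\bigl(N^\kappa/m^\kappa+N^{d\kappa}/m^{(d-1)\kappa}\bigr)^{1/2}$ yields a power saving uniformly for all $N\geq m^{1/2-\eta}$. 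The fix is simple: either retain both terms as in the paper, or invoke Theorem~\ref{thmnolog} only for $N\geq m^{1/2+\nu}$ with a fixed $\nu>0$ (where the logarithmic bound does beat $N^{1-\delta}$) and run your $\Delta$-argument with both Weyl terms for $m^{1/2-\eta}\leq N\leq m^{1/2+\nu}$.
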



\begin{proof}
  We begin by noting that a combination of the Weil bound for
  exponential sums with additive characters and of the Chinese
  Remainder Theorem shows that
$$
|\widehat{\varphi}(t)|\leq (d-1)^{\omega(m)}\ll m^{\eps}
$$
for any $\eps>0$ and any $t\in\Zz/m\Zz$, where the implied constant
depends only on $\eps$ and $d$ (we use here the fact that $P$ is of
degree $d$ modulo any prime divisor of $m$, since we assume that the
leading coefficient is invertible modulo $m$). In particular, we get
\begin{equation}\label{eq-fourier-sqf}
  c=\max(\|\varphi\|_{\infty},\|\widehat{\varphi}\|_{\infty})\ll
  m^{\eps}.
\end{equation}
\par
Let $\kappa=1/2^{d-1}$.  The key ingredient is the Weyl bound
\begin{equation}\label{song}
  \Bigl\vert \sum_{1\leq h \leq H } e_m({P(h)})
  \Bigr\vert \ll H^{1+\varepsilon}
  \Bigl(\frac{1}{H}+\frac{m}{H^d}\Bigr)^{\kappa}
\end{equation}
valid for an arbitrary $\eps>0$ and $1\leq H\leq m$, with an implied
constant that depends only on $d$ and $\eps$ (see~\cite[Lemma
20.3]{IK} or~\cite[Lemma 2.4]{Va}, and for recent bounds that are much
stronger for large $d$, see~\cite[Theorem 1.5]{Wooley}).  
This implies that for $1\leq t\leq m/2$, we have
$$
|S(\varphi,\pm t)|\ll \min\Bigl(|t|, |t|^{1+\varepsilon}
\Bigl(\frac{1}{|t|}+\frac{m}{|t|^d}\Bigr)^{\kappa} \Bigr)
$$
for any $\eps>0$, where the implied constant depends on $d$ and $\eps$
only. By~(\ref{defDelta}), this leads to
$$ 
\Delta(\vphi, H) \ll \frac{1}{\sqrt{m}}+\frac{m^{\eps}}{c} \Big(
\frac{1}{H^{\kappa}} + \frac{m^{\kappa}}{H^{d\kappa}}\Big)
$$
for any $\eps>0$, where the implied constant depends on $d$ and $\eps$
only.
\par
Appealing to \eqref{eqn3.12} from Theorem \ref{fouriertransfer}, we
conclude that
$$ 
\Big| \sum_{n\le N} {\what \vphi}(n) \Big| \ll cN^{1/2}+c^{1/2}
N^{\frac 12} m^{\frac 14+\epsilon} \Big(\frac{N^{\kappa}}{m^{\kappa}}
+ \frac{N^{d\kappa}}{m^{(d-1)\kappa}} \Big)^{\frac 12},
$$ 
which, with a small calculation using~(\ref{eq-fourier-sqf}), yields
the theorem.
\end{proof}

When the character $\chi$ in~(\ref{eq-function}) is non-trivial, we
will need to assume that the modulus is prime. 

\begin{theorem}\label{bingo}
  Let $p$ be a prime number. Let $P\in\Fp[X]$ be a polynomial of
  degree $d\geq 0$, let $\chi$ be a non-trivial multiplicative
  character modulo $p$ and let $m\in\Fp$. Define
  $\varphi\,:\, \Fp\lra \Cc$ by
$$
\vphi(n)=\chi(n+m)e_p(P(n))
$$
and let
$$
\what \vphi(n)=\frac{1}{p^{1/2}}\sum_{1\leq h\leq
  p}\chi(h+m)\,e_p(P(h)+nh)
$$
be its Fourier transform.
\par
For any $\eta<\frac{1}{8(d^2+d+1)}$ 
there exists $\delta>0$ such that for all $N$ with $N\geq
p^{1/2-\eta}$, we have
$$
\sum_{1\leq n\leq N}\what \vphi(n)\ll N^{1-\delta},
$$
where the implied constant depends only on $\eta$ and $d$.
\end{theorem}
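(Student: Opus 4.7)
The plan is to mirror the proof of Theorem~\ref{fourierpoly}: I will feed the Fourier transfer principle, Theorem~\ref{fouriertransfer}, with a non-trivial short-sum estimate for $\vphi(n) = \chi(n+m)e_p(P(n))$ and read off a bound on the partial sums of $\widehat\vphi$ via~\eqref{eqn3.12}.

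First I would verify that $c = \max(\|\vphi\|_\infty,\|\widehat\vphi\|_\infty) = O_d(1)$. The bound on $\|\vphi\|_\infty$ is trivial, while for the Fourier transform the Weil bound applied to the complete mixed sum $\sum_{h \bmod p}\chi(h+m)e_p(P(h)+nh)$ yields $|\widehat\vphi(n)|\ll_d 1$, since $\chi$ is non-trivial and the phase $P(X)+nX$ has degree at most $\max(d,1)$; this is a genuine Fourier sheaf situation analogous to the setup of Theorem~\ref{fourierpoly}.

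The analytic heart of the argument is a Burgess-type bound for the mixed character--exponential sum
\[
\Bigl|\sum_{1 \le h \le H}\chi(h+m)e_p(P(h))\Bigr| \ll_{d,r,\eps} H^{1-1/r}\, p^{(r+1)/(4r^2(d^2+d+1))+\eps},
\]
valid for any integer $r \ge 1$ and $1 \le H \le p$. This rests on the classical Burgess method combined with Weyl differencing to handle the polynomial phase, and is covered by the work of Heath-Brown, Chang, and Heath-Brown--Pierce cited in the introduction. Optimizing $r$ as a function of $H$, this yields a non-trivial power saving $|S(\vphi,\pm t)| \ll t^{1-\rho}$ in the range $t \ge p^{1/2-\eta_0}$ for suitable $\rho = \rho(d) > 0$ and $\eta_0 = \eta_0(d) > 0$. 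Inserting this into the definition~\eqref{defDelta} leads to an estimate of the shape
\[
\Delta(\vphi, m/N) \ll m^{-1/2} + m^{\eps}\bigl(N/m\bigr)^{\rho'}
\]
for an appropriate $\rho' > 0$ once $N$ is only slightly smaller than $\sqrt{m}$.

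Applying~\eqref{eqn3.12} from Theorem~\ref{fouriertransfer}, together with $c = O_d(1)$, then converts this into $|S(\widehat\vphi, N)| \ll N^{1-\delta}$ for some $\delta = \delta(\eta,d) > 0$, provided $\eta$ is small enough. The main obstacle is the precise bookkeeping in this last step: extracting the sharp threshold $\eta < \tfrac{1}{8(d^2+d+1)}$ requires carefully matching the Burgess exponent against the square-root loss in~\eqref{eqn1.11}, where the factor $8$ arises from combining the denominator $4$ in $(r+1)/(4r^2)$ with the additional halving built into the Fourier transfer. Aside from this calculation, the rest of the argument is formally parallel to that of Theorem~\ref{fourierpoly}.
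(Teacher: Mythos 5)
Your strategy is exactly the one the paper uses: estimate $c=O_d(1)$, feed a short mixed character--exponential sum bound into \eqref{defDelta}, and then run the Fourier transfer estimate \eqref{eqn3.12}. However, the specific Burgess-type input you quote is misstated. The Heath--Brown--Pierce bound (\cite{HBP}, Thm.\ 1.2) for $\sum_{1\le h\le H}\chi(h+m)e_p(P(h))$ has the shape
$$
\ll_{d,r}(\log p)^2\,H^{1-1/r}\,p^{(r+1+D)/(4r^2)},\qquad D=\tfrac{d(d+1)}{2},
$$
so the polynomial-degree penalty $D$ sits \emph{in the numerator} of the $p$-exponent, additively, not as a $d^2+d+1$ factor in the denominator as you wrote. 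With your formula the dependence on $d$ would be qualitatively wrong (and would actually predict a much stronger result). Using the correct bound, $\Delta(\vphi,H)\ll (\log p)^2 p^{(r+1+D)/(4r^2)}H^{-1/r}$, and \eqref{eqn3.12} gives
$$
\Bigl|\sum_{n\le N}\what\vphi(n)\Bigr|\ll (\log p)\,p^{\frac14+\frac{r+1+D}{8r^2}-\frac1{2r}}N^{\frac12+\frac1{2r}},
$$
and the exponent analysis shows this is $\ll N^{1-\delta}$ for $N\ge p^{1/2-\eta}$ exactly when $\eta<(r-1-D)/(4r(r-1))$; taking $r=2(D+1)$ optimizes this to $\eta<1/(8(2D+1))=1/(8(d^2+d+1))$. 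You left this as ``bookkeeping,'' but it is where the threshold actually comes from, and it relies on the correct exponent shape, so you should repair the quoted input before claiming the stated constant.
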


\begin{proof}
The method is similar to the previous case. However, we now use
instead of the Weyl bound a recent result of Heath-Brown and Pierce,
namely
$$
\Bigl\vert \sum_{1\leq n \leq N } \chi(n+m)e_p( {P(n)}) \Bigr\vert
\ll_{d,r} (\log p)^2 \min\Bigl(\,p^{1/2}, p^{\frac{r+1+D}{4r^2}}
N^{1-\frac{1}r}\,\Bigl)
$$
where $D=d(d+1)/2$ (see~\cite[Th. 1.2]{HBP}, noting that the bound is
stated there only for $N\leq q^{1/2+1/4r}$, but that it becomes weaker
than the P{\' o}lya-Vinogradov bound when $N\geq p^{1/2+1/4r}$).  Thus 
$$ 
\Delta(\vphi, N) \ll_{d,r} (\log p)^2p^{\frac {r+1+D}{4r^2}}{N^{-\frac 1r}}. 
$$ 
\par
Applying \eqref{eqn3.12} of Theorem \ref{fouriertransfer}, we obtain
(for $N>p^{\frac 14}$)
$$ 
\Big| \sum_{n\le N} \what \vphi(n) \Big| \ll p^{\frac 14 +\frac{r+1+D}{8r^2}-\frac{1}{2r}}N^{\frac 12 +\frac{1}{2r}} (\log p) .
$$ 
Choosing $r=2(D+1)$, the theorem follows.  
\end{proof} 

\begin{remark}
  The bound of Heath-Brown and Pierce is the latest of a series of
  works by Enflo, Heath-Brown and Chang,
  see~\cites{Enflo,HBreview,chang}, any one of which would lead the
  qualitative form in Corollary~\ref{cor-below-pv}.
\end{remark}

We now consider the special case of the cubic Birch sums to prove
Corollary~\ref{cor-distrib-birch}. 

\begin{proof}[Proof of Corollary~\ref{cor-distrib-birch}]
  Let $p$ be prime and let $\varphi$ be defined on $\Fp$ by
$$
\vphi(h)=e_p( {h^3} ).
$$ 
As before, we denote by
$$
\rmB_3(n)=\frac{1}{p^{1/2}}\sum_{1\leq h\leq p}e_p( h^3+nh )
$$
its Fourier transform. Note that for any fixed $m\in\Zz$, the shifted
Birch sums $n\mapsto \rmB_3(n+m)$ is also a Fourier transform of a
polynomial, namely of
$$
h\mapsto e_p(h^3+hm).
$$
\par
Thus the estimate~(\ref{eq-mom1-birch}) for the first moment is a
special case of Theorem~\ref{fourierpoly}: for any $\eta<1/6$, there
exists $\delta>0$, depending only on $\eta$, such that
$$
\sum_{m\leq n\leq m+N}\rmB_3(n)\ll N^{1-\delta}
$$
for all $m\in\Zz$, provided $N > p^{1/2 -\eta}$.
\par
We now consider the second moment. We assume $p\geq 5$ and we define
$$
\psi(n)=|\rmB_3(n)|^2-1,
$$
and compute its Fourier transform. For any $h\in\Fpt$, we have
\begin{align*}
  \what \psi(h)&=\frac{1}{p^{1/2}} \sum_{n\in\Fp}
                 (|\rmB_3(n)|^2-1)e_p(nh) \\
               & =\frac{1}{p^{3/2}}\sum_{u,v,n\in\Fp}
                 e_p\bigl(u^3-v^3+n(u-v+h)\bigr)
                 =\frac{1}{p^{1/2}}\sum_{u\in\Fp}e_p(u^3-(u+h)^3 ).
\end{align*}
This is a quadratic complete sum, hence it can be evaluated
exactly. Precisely, we obtain by completing the square the formula
$$
\what \psi(h)=\eps_p \Bigl(\frac{h}{p}\Bigr)e_p (-h^3/4)
$$
for $h\in\Fpt$, where $\eps_p$ is a complex number of modulus one
independent of $h$. In fact, the same formula holds for $h=0$, as one
checks immediately by a similar computation. 
\par
By the discrete Fourier inversion formula, we deduce that $\psi$ is
the Fourier transform of the function $n\mapsto \eps_p
\bigl(\frac{n}{p}\bigr)e_p (n^3/4)$. Hence Theorem~\ref{bingo}
implies  (after using an
additive shift by $m$ as above to get the estimate for any interval)
$$
\sum_{m\leq n\leq m+N}(|\rmB_3(n)|^2-1)\ll N^{-\delta}
$$
for any $\eta<1/104$ and some $\delta>0$ depending only on $\eta$,
which is more precise than~\eqref{eq-mom2-birch}.
\par
We now prove the final part of Corollary~\ref{cor-distrib-birch}
concerning the distribution of values of Birch sums.
Let $\eta<1/104$ be fixed and let $I$ be an interval in $\Zz$ with
$|I|\geq p^{1/2-\eta}$.  From our work above, we know that for some $\delta>0$ (depending only on $\eta$) we have 
$$ 
(1+O(p^{-\delta}))|I| = \sum_{n\in I} |\rmB_3(n)|^2 \le t^2 \sum_{\substack{n\in I \\  \rmB_3(n)\in [0,t]}} 1 + 4\sum_{\substack{n \in I \\ \rmB_3(n)\notin [0,t]}} 1, 
$$ 
where we have used the Weil bound $|\rmB_3(n)| \le 2$.  With a little rearranging, this yields 
\begin{equation} 
\label{eqn3.16} 
\sum_{\substack {n\in I \\ \rmB_3(n)\in [0,t]}} 1 \le \Big(\frac{3}{4-t^2}+o(1)\Big) |I|. 
\end{equation} 
On the other hand, since $\sum_{n\in I} \rmB_3(n) = o(|I|)$ we have  
$$ 
\sum_{\substack {n\in I \\ \rmB_3(n) <0} } \rmB_3(n)^2  \le 2 \sum_{\substack{ n\in I \\ \rmB_3(n) <0}} |\rmB_3(n)| 
= 2 \sum_{\substack{ n\in I \\ \rmB_3(n)\ge 0}} \rmB_3(n) + o(|I|) \le 2t \sum_{\substack{ n \in I \\ \rmB_3(n) \in [0,t]}} 1 + 
4 \sum_{\substack{n \in I \\ \rmB_3(n)\in (t,2]}} 1 +o(|I|), 
$$  
and therefore 
$$ 
(1+o(1))|I| = \sum_{ n\in I} |\rmB_3(n)|^2 \le t^2 \sum_{\substack{n\in I \\ \rmB_3(n)\in [0,t]}} 1 + 4 \sum_{\substack{n\in I\\ \rmB_3(n) \in (t,2]}} 1 + 2t 
\sum_{\substack{ n \in I \\ \rmB_3(n) \in [0,t]}} 1 + 
4 \sum_{\substack{n \in I \\ \rmB_3(n)\in (t,2]}} 1 +o(|I|).
$$ 
Combining this with \eqref{eqn3.16}, we deduce that (for $t<1/2$) 
$$ 
\sum_{\substack{n\in I \\ \rmB_3(n) >t}} 1 \ge \frac{1}{8} \Big( 1- \frac{3t}{2-t} + o(1)\Big)  |I| = \Big( \frac{1-2t}{4(2-t)} + o(1)\Big) |I|, 
$$ 
as desired.  The bound on the number of $\rmB_3(n) <-t$ is obtained similarly.  The last bound on the frequency of $n$ with $|\rmB_3(n)| \ge t$ follows upon noting that 
$$ 
|I| \sim \sum_{n\in I} |\rmB_3(n)|^2 \le t^2 \sum_{\substack{ n \in I\\ |\rmB_3(n)| \le t}} 1 + 4 \sum_{\substack{ n \in I\\ |\rmB_3(n)| > t}} 1 \sim t^2 |I| + (4-t^2)  \sum_{\substack{ n \in I\\ |\rmB_3(n)| > t}} 1.
$$ 
\end{proof}


Finally, we prove Corollary \ref{BouGarFou}. 

\begin{proof}[Proof of Corollary~\ref{BouGarFou}]
  Let $p$ be prime and $a\in\Fpt$. Let $k=1$ or $k=2$, and define
$$
\varphi(h)=e_p(a{ h}^{-k})
$$
for $h\in\Fpt$ and $\varphi(0)=0$. Then
$$\what{\varphi}(n)=
\frac{1}{p^{1/2}}\sum_{h\in\Fpt}e_p({a h^{-k}+nh}).
$$

In~\cites{BoGa,BouPell}, Bourgain and Garaev have obtained non-trivial
estimates for sums of $\varphi$.
Precisely, they proved that
there exist absolute constants $\delta>0$ and $0<\eta<1$ such that
\begin{equation}\label{Bour}
  \max_{a\in\Fpt}\, \Bigl\vert\, 
  \sum_{1\leq h\leq H}e_p\bigl(a{h^{-k}}\bigr) \Bigr\vert \ll 
  H{(\log p)^{-\delta}}
\end{equation}
provided $H\geq \exp((\log p)^{\eta})$.  Thus, if
$H\ge \exp((\log p)^{\eta})$, then
$\Delta(\vphi,H)\ll (\log p)^{-\delta}$.
\par  
Applying \eqref{eqn3.12} of Theorem \ref{fouriertransfer}, we obtain
for
$p^{\frac 12} (\log p)^{-\frac{\delta}{2}} \le N \le p^{\frac 12}(\log
p)^2$,
$$ 
S(\varphi,N)\ll \sqrt{N} p^{\frac 14} (\log p)^{-\frac {\delta}{2}}
\ll N (\log p)^{-\frac{\delta}{4}}.
$$ 
Since this bound holds also in the range
$N\ge p^{\frac 12} (\log p)^2$ by P{\' o}lya-Vinogradov, our proof is
complete.
\end{proof}



\end{document}